\renewcommand{\d}{\mathrm{d}}
\newcommand{\E}{\mathrm{E}}
\numberwithin{equation}{section}
\numberwithin{figure}{section}
\theoremstyle{plain}
\newtheorem{theorem}{Theorem}[section]
\newtheorem{proposition}[theorem]{Proposition}
\newtheorem{lemma}[theorem]{Lemma}
\theoremstyle{definition}
\newtheorem{definition}[theorem]{Definition}
\newtheorem{example}[theorem]{Example}
\newtheorem{remark}[theorem]{Remark}
\title{%Time-dynamic evaluations under non-monotone information generated by marked point processes
A martingale concept for non-monotone
information in a jump process framework
}
\author{Marcus C.~Christiansen\footnote{Carl von Ossietzky Universit\"{a}t Oldenburg, Institut f\"{u}r Mathematik, 26111 Oldenburg, Germany, marcus.christiansen@uni-oldenburg.de}}
\date{\today
}
\begin{document}

\maketitle

\begin{abstract}
The information dynamics in finance and insurance applications is usually modeled by a filtration. This paper looks at situations where information restrictions apply  such that the information dynamics may become non-monotone. A fundamental tool for calculating and managing risks in finance and insurance are martingale representations. We present a general theory that extends classical martingale representations to non-monotone information generated by  marked  point processes. The central idea is to focus only on those properties that martingales and compensators show on infinitesimally short intervals. While classical martingale representations describe innovations only,  our representations have an additional symmetric counterpart that quantifies the effect of information loss. We exemplify the results with examples from life insurance and credit risk.
\end{abstract}

Keywords: credit risk modeling; life insurance modeling; information restrictions; optional projections; infinitesimal martingale representations

%\begin{keyword}[class=MSC]
%\kwd[Primary ]{}
%\kwd{60G48}
%\kwd[; secondary ]{}
%\end{keyword}

%By using our infinitesimal martingale representation, we obtain an  extension of the stochastic Thiele equation to non-monotone information settings. A second application of infinitesimal martingale representations is the error quantification of  Markovian approximations. When we  approximate a non-Markovian real-world problem by a Markovian model for reasons of simplifying computations or data minimisation, we abandon information of the past, making the information dynamics de facto non-monotone.

\section{Introduction}

The value at time $t \in [0, T]$ of a financial claim $\xi \in L^1(\Omega , \mathcal{A},P)$ at time $T \in (0, \infty)$ is commonly calculated by
\begin{align}\label{EvaluationFormula}
  B(t)  \,\E_Q\Big[  \frac{\xi}{B(T)} \Big| \mathcal{F}_t \Big],
\end{align}
where $B$ is the value process of a risk-free asset,  $(\mathcal{F}_t)_{t \geq 0}$ is a filtration that describes the available information at each time $t\geq 0$, and $Q$ is some equivalent measure. For studying the time dynamics of the value process, we can exploit the fact that  $t \mapsto \E_Q[  \xi / B(T) | \mathcal{F}_t ]$ is always a martingale.

In this paper we suppose that information restrictions apply and replace the filtration $(\mathcal{F}_t)_{t \geq 0}$ by a family of sub-sigma-algebras $(\mathcal{G}_t)_{t \geq 0}$ that may be non-monotone, i.e.~we do not assume that $(\mathcal{G}_t)_{t \geq 0}$ is a filtration. We focus on modelling frameworks where  $(\mathcal{G}_t)_{t \geq 0}$ is generated by a marked point process, because this allows us to calculate martingale representations explicitly. Our approach seems to work also in more general settings, but a general theory is left to future research.

Information restrictions can be motivated by legal restrictions, data privacy efforts, information summarization or model simplifications.
An example for a legal information restriction is the General Data Protection Regulation 2016/679 of the European Union, which includes in Article 17 a so-called 'right to erasure', causing possible information loss.
\begin{example}[Evaluation of life insurance based on big data]\label{IntExpLifeIns}
Data from activity trackers, social media, etc.~can improve individual forecasts of the mortality and morbidity of insured persons. By exercising the 'right to erasure' according to the General Data Protection Regulation of the European Union, the policyholder may ask the insurer to delete parts of the health related data at discretion. Moreover, data providers might implement self-imposed  information restrictions for data privacy reasons. For example, users of Google products can opt for an  auto-delete of location history and activity data after a fixed time limit. As a result, the evaluation of an insurance liability $\zeta$ according to formula \eqref{EvaluationFormula} will be restricted to sub-sigma-algebras $(\mathcal{G}_t)_{t \geq 0}$ that  are non-monotone in $t$ due to data deletions.
\end{example}
Examples of information summarization can be found in Norberg (1991), where summarized life insurance values (retrospective and prospective reserves) are defined that encompass non-monotone information.
A popular model simplification is Markovian modeling even when the empirical data is not fully supporting the Markov assumption.
\begin{example}[Markovian approximations in credit rating models]\label{IntExpCredRat}
In the Jarrow-Lando-Turnbull model, the filtration $(\mathcal{F}_t)_{t \geq 0}$ is generated by a finite state space Markov chain $(R_t)_{t \geq 0}$ that represents credit ratings, cf.~Jarrow et al.~(1997). The Markov property makes it possible to equivalently replace $\mathcal{F}_t$ in \eqref{EvaluationFormula} by the sub-sigma-algebra $\mathcal{G}_t:=\sigma(R_t)$. The Markov assumption can be motivated by the theoretical idea that a credit rating should fully describe the current risk profile of a prospective debtor such that historical ratings can be ignored. However, empirical data does not always support the Markov property,  so that $\E_Q[  \xi / B(T) | \mathcal{G}_t ]$ may in fact differ from $\E_Q[  \xi / B(T) | \mathcal{F}_t ]$, cf.~Lando and Skodeberg (2002). %The difference between the latter two expectations quantifies the model risk.
The information dynamics of $\mathcal{G}_t=\sigma(R_t)$ is non-monotone in $t$.
\end{example}
Non-monotone information structures can also be found in Pardoux \& Peng (1994) and Tang \& Wu (2013), but in these papers specific independence assumptions make it possible to go back to filtrations and to work with classical martingale representations.
%This paper focusses on the fundamental concept of infinitesimal martingale representations and looks briefly at the Examples \ref{IntExpLifeIns} and \ref{IntExpCredRat}.

%
From now on we skip the subscript $Q$ in \eqref{EvaluationFormula} and all related expectations. Depending on the application, we  interpret $P$ either as the real world measure or as a risk-neutral measure.

When we replace filtration $(\mathcal{F}_t)_{t \geq 0}$ in \eqref{EvaluationFormula} by non-monotone information $(\mathcal{G}_t)_{t \geq 0}$, then all the powerful tools from martingale theory for studying the time dynamics of \eqref{EvaluationFormula} are not available anymore. In order to fill that gap, this paper derives general representations of the form
\begin{align}\label{MartingRepres}%{TypeOfProblem}
\begin{split}
 \E[ \xi | \mathcal{G}_t]-\E[ \xi | \mathcal{G}_0]
 &= \sum_{I \in \mathcal{M}} \int_{(0,t ]\times E_I } G_I(u-,u,e) \,(\mu_I-\nu_I) (\d (u,e)) \\
 &\quad + \sum_{I \in \mathcal{M}}\int_{(0,t]\times E_I} G_I(u,u,e) \,(\rho_I- \mu_I) (\d (u,e)), \quad  t \geq 0,
\end{split}\end{align}
where $\xi$ is any integrable random variable,  $(\mathcal{G}_t)_{t \geq 0}$ is a non-monotone family of sigma-algebras generated by an extended marked point process that involves information deletions,
$(\mu_I)_{I \in \mathcal{M}}$ is a set of counting measures  that uniquely corresponds to the extended marked point process,
 $(\nu_I)_{I \in \mathcal{M}}$ and $(\rho_I)_{I \in \mathcal{M}}$ are infinitesimal forward and backward compensators of $(\mu_I)_{I \in \mathcal{M}}$, and the integrands  $G_I(u-,u,e)$ and $G_I(u,u,e)$ are adapted to the information at time $u-$ and time $u$, respectively.
In case that $(\mathcal{G}_t)_{t \geq 0}$ is increasing, i.e.~it is a filtration, the second line in \eqref{MartingRepres} is zero and the first line   conforms with classical martingale representations.  The central idea in this paper is to focus on those properties only that martingales and compensators show on infinitesimally small intervals. We call this the 'infinitesimal approach'. In principle, the infinitesimal approach is not restricted to point process process frameworks, but a fully general theory is beyond the scope of this paper. We will further extend our representation results to  processes of the form
\begin{align}\label{TypeOfProblem2}
  t \mapsto \E[ X_t | \mathcal{G}_t ], \quad  t \geq 0,
\end{align}
where $(X_t)_{t \geq 0}$ is a suitably integrable c\`{a}dl\`{a}g process. In this case an additional drift term appears on the right hand side of \eqref{MartingRepres}.

Martingale representations have various  applications in finance and insurance, and this in particular true for marked point process frameworks:
\begin{itemize}
\item  If a financial or insurance claim is hedgeable, then explicit hedges can be derived from  martingale representations, see e.g.~Norberg (2003) and Last \& Penrose (2011).
%\item In case that additional Markov assumptions apply,  martingale representations are popular tool for  deriving Feynman-Kac equations for the financial and actuarial value processes, see e.g.~Norberg (2003) and Norberg (2005).
\item Martingale representations are a central tool for constructing and solving backward stochastic differential equations (BSDE), see e.g.~Cohen \& Elliott (2008), Bandini (2015)  and Confortola (2019). Many   optimal control problems in finance and insurance correspond to a BSDE problem, see e.g.~Cohen \& Elliott (2010) and Delong (2013).
\item Martingale representations can serve as additive risk factor decompositions, see Schilling et al.~(2020). An insurer needs to additively decompose the surplus from a policy or an insurance portfolio for regulatory reasons, see e.g.~M{\o}ller \& Steffensen (2007). Additive risk factor decompositions are also used in  finance, see e.g.~Rosen \& Saunders (2010).
\end{itemize}
In all three applications,  infinitesimal martingale representations according to \eqref{MartingRepres} allow us to include information restrictions into the modelling. We will study a hedging application for the model in Example \ref{IntExpCredRat}. We will see that estimation and calculation of hedging strategies under inappropriate Markov assumptions may unintentionally replace classical martingales by infinitesimal forward martingales (the first line on the right hand side of \eqref{MartingRepres}), and then the implied hedging error is just  the corresponding infinitesimal backward martingale part (the second line in \eqref{MartingRepres}). The application of infinitesimal martingale representations in BSDE theory is exemplarily discussed for Example \ref{IntExpLifeIns}. We will see that the integrands in \eqref{MartingRepres} correspond to the so-called sum at risk, which is a central figure in life insurance risk management. In Example \ref{IntExpLifeIns} we also briefly discuss risk factor decompositions. Information deletions upon request for  data privacy reasons can provoke arbitrage opportunities, and these can be split off as infinitesimal backward martingales, which is important for dealing with them.

Representation \eqref{MartingRepres} implies that $t \mapsto \E[ \xi | \mathcal{G}_t ]$ has a (unique) semimartingale modification. More generally, we will show that $t \mapsto \E[ X_t | \mathcal{G}_t ]$  has a (unique) semimartingale modification whenever $X$ is a semimartingale with integrable variation on compacts. %We then denote this process as the optional projection of $X$ with respect to $\mathcal{G}$.
The uniqueness and the semimartingale property are crucial in applications where the time dynamics shall be studied.  For example, in  life insurance the differential $\d E[ X_t | \mathcal{G}_t]$ might describe  the insurer's current surplus or loss at time $t$, cf.~Norberg (1992) and Norberg (1999).

The study of jump process martingales and their representations largely dates back to the 1970s, see e.g.~Jacod (1975), Boel et al.~(1975), Chou \& Meyer (1975), Davis (1976) and Elliott (1976). Since then extensions have been developed in different directions, see e.g.~Last \& Penrose (2011) and Cohen (2013). All of these papers stay within the framework of filtrations, i.e.~the information dynamics is monotone. The infinitesimal approach that we introduce here allows us to go beyond the framework of filtrations.
An elegant way to derive the classical martingale representation is a bare hands approach that starts with the Chou and Meyer construction of the martingale representation for a single jump process, followed by Elliott's extension to the case of ordered jumps. In this paper we also use a bare hands approach, but the classical stopping time concept is not applicable in our non-monotone information setting, so we need to leave the common paths.

The paper is organized as follows. In Section 2 we explain the basic concepts of the infinitesimal approach but avoid technicalities. In Section 3 we add technical assumptions and narrow the modelling framework down to pure jump process drivers. Section 4 verifies that \eqref{MartingRepres} is indeed a well-defined process. In Section 5 we identify infinitesimal compensators for a large class of jump processes. The central result \eqref{MartingRepres}  is  proven in Section 6 and extended to processes of the form  \eqref{TypeOfProblem2} in Section 7. In Section 8 we take a closer look at the Examples \ref{IntExpLifeIns} and \ref{IntExpCredRat}.

\section{The infinitesimal approach}

The central idea of the infinitesimal approach is to focus only on those properties that martingales and compensators show on infinitesimally short intervals. This section explains the basic ideas under the  general assumption that all limits in this section actually exist. Only from the next section on we narrow the framework down to pure jump process drivers, which is sufficient but not necessary to guarantee the existence of the limits. So, in general the infinitesimal approach is not restricted to jump process frameworks, but it is beyond the scope of this paper to find necessary conditions for the existence of the limits here.

Let $(\Omega, \mathcal{A}, P)$ be a complete probability space and let $\mathcal{Z}\subset \mathcal{A}$ be its null sets.  Let
 $\mathcal{F}=(\mathcal{F}_t)_{t \geq 0 }$ be a complete and right-continuous %c\`{a}dl\`{a}g
 filtration on this probability space.
We  interpret $\mathcal{F}_t$ as the observable information on the time interval $[0,t]$. %The limiting sigma-algebra $\mathcal{F}_{\infty} := \bigvee_{u\geq 0} \mathcal{F}_u$ is an upper bound for the observable information on $[0,\infty)$.
Suppose that certain pieces of information expire after a finite holding time.  By subtracting from $\mathcal{F}_{t}$ all pieces of information that have expired until time $t$, we obtain the admissible information at time $t$.
We assume that this admissible information is represented by
a  %c\`{a}dl\`{a}g
family of complete sigma-algebras $\mathcal{G}=(\mathcal{G}_t)_{t \geq 0 }$,
\begin{align*}
 \mathcal{G}_t \subseteq  \mathcal{F}_t, %\quad \bigvee_{u:u > t}   \bigcap_{s: t < s < u} \mathcal{G}_s= \mathcal{G}_t = \bigcap_{u:u > t}   \bigvee_{s: t < s < u} \mathcal{G}_s,
 \quad t \geq 0,
\end{align*}
which may be non-monotone in $t$.

A process $X$ is said to be adapted to the filtration $\mathcal{F}$  if $X_t$ is $\mathcal{F}_t$-measurable for each $t \geq 0$.
Likewise we say that a process $X$ is adapted to the possibly non-monotone information $\mathcal{G}$ if $X_t$ is $\mathcal{G}_t$-measurable for each $t \geq 0$. In addition to this classical concept,  we also take an incremental  perspective. %In the following let $X$ always be a c\`{a}dl\`{a}g process.
\begin{definition}[incrementally adapted]\label{DefIncremAdap}
We say that  $X$ is incrementally adapted to $\mathcal{G}$ if $X_t-X_{s}$  is $\sigma( \mathcal{G}_{u}, u  \in [s,t])$-measurable for any interval $[s,t] \subset [0,\infty)$.
\end{definition}
In finance and insurance applications we think of $X$ as an aggregated  cash flow where the aggregated payments $X_t-X_s$ on the interval $[s,t]$ should depend only on the admissible information on $[s,t]$.
If $\mathcal{G}$ is a filtration, then incremental adaptedness is equivalent to classical adaptedness, but the two concepts differ for non-monotone information.

An integrable process $X$ is said to be a martingale with respect to $\mathcal{F}$ if it is $\mathcal{F}$-adapted and
\begin{align*}
   \E[ X_t - X_s | \mathcal{F}_s ] =0
\end{align*}
almost surely for each $0 \leq s \leq t$. Focussing on infinitesimally short intervals, in particular we have
\begin{align}\label{ClassMartInfProp}
    \lim_{n \rightarrow \infty} \sum_{\mathcal{T}_n} \E[ X_{t_{k+1}} - X_{t_{k}} | \mathcal{F}_{t_{k}}] =0
\end{align}
almost surely for each $t \geq 0$,  where $(\mathcal{T}_n)_{n\in \mathbb{N}}$ is any increasing sequence (i.e.~$\mathcal{T}_n \subset \mathcal{T}_{n+1}$ for all $n$) of partitions $0=t_0 < \cdots < t_n=t$ of the interval $[0,t]$ such that $|\mathcal{T}_n| := \max\{ t_{k}-t_{k-1}: k=1, \ldots, n\} \rightarrow 0$ for $n \rightarrow \infty$. In the literature we can find for \eqref{ClassMartInfProp} the intuitive notation $\E[ \d X_t | \mathcal{F}_{t-} ] =  0$.
\begin{definition}[infinitesimal martingales] \label{DefInfMarting}
Let $X$ be  incrementally adapted to $\mathcal{G}$. We say that $X$ is an infinitesimal forward/backward martingale (IF/IB-martingale) with respect to $\mathcal{G}$  if for each $t \geq 0$ and any increasing sequence of partitions $(\mathcal{T}_n)_{n\in \mathbb{N}}$ of $[0,t]$ with $\lim_{n \rightarrow \infty} |\mathcal{T}_n|=0$ we almost surely have
\begin{align*}
   \lim_{n \rightarrow \infty} \sum_{\mathcal{T}_n} \E[ X_{t_{k+1}} - X_{t_{k}} | \mathcal{G}_{t_{k}}] = 0
\end{align*}
 and
\begin{align*}
  \lim_{n \rightarrow \infty} \sum_{\mathcal{T}_n} \E[ X_{t_{k+1}} - X_{t_{k}} | \mathcal{G}_{t_{k+1}}] =0,
\end{align*}
respectively, given that the expectations and limits exist.
\end{definition}
%
%An intuitive notation for the IF/IB-martingale property is  $\E[ \d X_t | \mathcal{G}^-_{t} ] =  0$ and $\E[ \d X_t | \mathcal{G}_{t} ] =  0$.
%
Suppose now that $X$ is an $\mathcal{F}$-adapted and integrable counting process. The so-called compensator $C$ of $X$ is the unique $\mathcal{F}$-predictable finite variation process starting from $C_0=0$ such that $X-C$ is an $\mathcal{F}$-martingale. In particular, $C$ satisfies the equation
\begin{align}\label{InfCompensatorProperty}
  C_t  = \lim_{n \rightarrow \infty} \sum_{\mathcal{T}_n} \E[ X_{t_{k+1}} - X_{t_{k}} | \mathcal{F}_{t_{k}}]
\end{align}
almost surely for each $t \geq 0$, see Karr (1986, Theorem 2.17).
The intuitive notation for \eqref{InfCompensatorProperty}  is  $ \E[ \d X_t  | \mathcal{F}_{t-} ] = \d C_t$.
Furthermore, one can show that the $\mathcal{F}$-predictability of $C$ implies that
\begin{align}\label{InfCompensatorProperty2}
    \lim_{n \rightarrow \infty} \sum_{\mathcal{T}_n} \E[ C_{t_{k+1}} - C_{t_{k}} | \mathcal{F}_{t_{k}}]=C_t -C_0,
\end{align}
intuitively written as  $\E[ \d C | \mathcal{F}_{t-} ] =  \d C_t$. %In particular, this means that $C_t-C_s$ is $\mathcal{F}_{(s,t)}$-adapted for all $0 \leq s < t$.
The latter fact motivates the following definition.
\begin{definition}[infinitesimally predictable processes]\label{DefIFIBpred}
We say that $X$ is infinitesimally forward/backward predictable (IF/IB-predictable) with respect to  $\mathcal{G}$  if for each $t \geq 0$ and any increasing sequence of partitions $(\mathcal{T}_n)_{n\in \mathbb{N}}$ of $[0,t]$ with $\lim_{n \rightarrow \infty} |\mathcal{T}_n|=0$ we almost surely have
\begin{align*}
   \lim_{n \rightarrow \infty} \sum_{\mathcal{T}_n} \E[ X_{t_{k+1}} - X_{t_{k}} | \mathcal{G}_{t_{k}}] = X_t-X_0
\end{align*}
and
\begin{align*}
  \lim_{n \rightarrow \infty} \sum_{\mathcal{T}_n} \E[ X_{t_{k+1}} - X_{t_{k}} | \mathcal{G}_{t_{k+1}}] =X_t-X_0,
\end{align*}
  respectively, given that the expectations and limits exist.
\end{definition}
Note that any IF/IB-predictable process is also incrementally adapted.
By combining \eqref{InfCompensatorProperty} and \eqref{InfCompensatorProperty2}, we obtain
\begin{align}\label{InfCompensatorProperty3}
 \lim_{n \rightarrow \infty} \sum_{\mathcal{T}_n} \E[ (X_{t_{k+1}}- C_{t_{k+1}}) - (X_{t_k}-C_{t_{k}}) | \mathcal{F}_{t_{k}}] =0
\end{align}
almost surely for each $t \geq 0$, which means that the process $X-C$ is an IF-martingale with respect to $\mathcal{F}$ according to Definition \ref{DefInfMarting}.
\begin{definition}[infinitesimal compensators]\label{DefInfCompens}
We say that a process $C$  is an infinitesimal forward/backward compensator of $X$ (IF/IB-compensator) with respect to $\mathcal{G}$ if $C$ is IF/IB-predictable and $X-C$ is an IF/IB-martingale with respect to $\mathcal{G}$, respectively.
\end{definition}
%
%To sum up, in the infinitesimal approach the defining properties of martingales and compensators are only requested on infinitesimally short intervals, and we are adding the time reversed perspective.
%
Let $\mathcal{G}_{[t_k,t_{k+1}]}:=\sigma( \mathcal{G}_{u}, u  \in [t_k,t_{k+1}])$ for any $t_{k+1} \geq t_k \geq 0$ and $\xi \in L^1(\Omega , \mathcal{A},P)$.
Then the construction
\begin{align*}
   \E[ \xi | \mathcal{G}_t]-\E[ \xi | \mathcal{G}_0]
   &=  \lim_{n \rightarrow \infty} \sum_{\mathcal{T}_n} \Big(\E[ \xi | \mathcal{G}_{t_{k+1}}]  -\E[ \xi | \mathcal{G}_{t_{k}}]\Big)\\
   & = \lim_{n \rightarrow \infty} \sum_{\mathcal{T}_n} \Big(\E[ \xi | \mathcal{G}_{[t_k,t_k+1]}]  -\E[ \xi | \mathcal{G}_{t_{k}}]\Big)\\
   & \quad - \lim_{n \rightarrow \infty} \sum_{\mathcal{T}_n} \Big(\E[ \xi | \mathcal{G}_{[t_k,t_k+1]}]  -\E[ \xi | \mathcal{G}_{t_{k+1}}]\Big)
\end{align*}
decomposes  the process $t\mapsto \E[ \xi | \mathcal{G}_t]$ into the difference of an IF-martingale and an IB-martingale, since
\begin{align*}
&\E\big[ \E[ \xi | \mathcal{G}_{[t_k,t_k+1]}]  -\E[ \xi | \mathcal{G}_{t_{k}}]\big| \mathcal{G}_{t_{k}}\big] =0,\\
&\E\big[ \E[ \xi | \mathcal{G}_{[t_k,t_k+1]}]  -\E[ \xi | \mathcal{G}_{t_{k+1}}]\big| \mathcal{G}_{t_{k+1}}\big] =0,
\end{align*}
 and since $\E[ \xi | \mathcal{G}_{[t_k,t_k+1]}]  -\E[ \xi | \mathcal{G}_{t_{k}}]$ and $\E[ \xi | \mathcal{G}_{[t_k,t_k+1]}]  -\E[ \xi | \mathcal{G}_{t_{k+1}}]$ are $\mathcal{G}_{[t_k,t_{k+1}]}$-measurable.
\begin{definition}[infinitesimal martingale representation]
We say that $\E[ \xi | \mathcal{G}_t]-\E[ \xi | \mathcal{G}_0] = F_t - B_t$, $t \geq 0$, is an infinitesimal martingale representation if $F$ is an IF-martingale and $B$ is an IB-martingale with respect to $\mathcal{G}$.
\end{definition}
%often
Suppose now that $X$ describes a discounted claim process in a finance or insurance application. Then we  are typically interested in the process $t \mapsto \E [ X_t | \mathcal{F}_t]$, which is not necessarily well-defined. If $X$ is a  c\`{a}dl\`{a}g process whose suprema on compacts have finite expectations, then  there exists a unique c\`{a}dl\`{a}g process $X^{\mathcal{F}}$, the so-called optional projection of $X$ with respect to $\mathcal{F}$, such that
\begin{align*}
  X^{\mathcal{F}}_t = \E [ X_t | \mathcal{F}_t]
\end{align*}
almost surely for each $t \geq 0$.  We say here that a process is unique if it is unique up to evanescence. We now expand the concept of optional projections to non-monotone information.
\begin{definition}[optional projection]\label{DefOptProj} Let $X$ be an integrable c\`{a}dl\`{a}g process.
If there exists a unique c\`{a}dl\`{a}g process $X^{\mathcal{G}}$ such that
\begin{align*}
  X^{\mathcal{G}}_t = \E [ X_t | \mathcal{G}_t]
\end{align*}
almost surely for each $t \geq 0$, then we call $X^{\mathcal{G}}$ the optional projection of $X$ with respect to  $\mathcal{G}$.
\end{definition}
The optional projection $X^{\mathcal{G}}$ can be decomposed to
\begin{align*}
   \E[ X_t | \mathcal{G}_t]-\E[ X_0 | \mathcal{G}_0]
   &=  \lim_{n \rightarrow \infty} \sum_{\mathcal{T}_n} \Big(\E[ X_{t_{k+1}} | \mathcal{G}_{t_{k+1}}]  -\E[ X_{t_k} | \mathcal{G}_{t_{k}}]\Big)\\
   & = \lim_{n \rightarrow \infty} \sum_{\mathcal{T}_n} \Big(\E[  X_{t_{k}} | \mathcal{G}_{[t_k,t_k+1]}]  -\E[  X_{t_{k}} | \mathcal{G}_{t_{k}}]\Big)\\
   & \quad - \lim_{n \rightarrow \infty} \sum_{\mathcal{T}_n} \Big(\E[  X_{t_{k}} | \mathcal{G}_{[t_k,t_k+1]}]  -\E[  X_{t_{k}} | \mathcal{G}_{t_{k+1}}]\Big)\\
   & \quad  + \lim_{n \rightarrow \infty} \sum_{\mathcal{T}_n}   \E[ X_{t_{k+1}}- X_{t_{k}} | \mathcal{G}_{t_{k+1}}],
\end{align*}
which is a sum of an IF-martingale, an IB-martingale and an IB-compensator with respect to $\mathcal{G}$. By switching the roles of $t_k$ and $t_{k+1}$ we can obtain a similar decomposition where the IB-compensator is replaced by an IF-compensator.
\begin{definition}[infinitesimal representation of optional projections]
We call $\E[ X_t | \mathcal{G}_t]-\E[ \xi | \mathcal{G}_0] = F_t - B_t + C_t$,  $t \geq 0$,  an infinitesimal representation if $F$ is an IF-martingale, $B$ is an IB-martingale, and $C$ is either an IB-compensator or an IF-compensator with respect to $\mathcal{G}$.
\end{definition}
As we mentioned at the beginning of this section, so far we simply assumed that all the limits that we discussed here indeed exist. In the next section we focus on a marked point process framework, since this guarantees not only the existence of the limits but also allows us to calculate the limits explicitly.

\section{Jump process framework}\label{SectionFramework}

In the literature, we can find different approaches for defining a jump process framework. One way is to start with a marked point process $(\tau_i,\zeta_i)_{i \in \mathbb{N}}$  on $(\Omega, \mathcal{A}, P)$
with some measurable mark space $(E, \mathcal{E})$, i.e.
\begin{itemize}
  \item the $\tau_i:(\Omega, \mathcal{A}) \rightarrow ([0,\infty], \mathcal{B}([0,\infty]))$, $i \in \mathbb{N}$ are random times,
  \item the $\zeta_i: (\Omega, \mathcal{A}) \rightarrow   (E, \mathcal{E})$ are random variables giving the marks.
\end{itemize}
Different from the point process literature,  we  do not assume here that the random times $(\tau_i)_{i\in \mathbb{N}}$ are increasing or ordered in any specific way. This gives us useful modelling flexibility, see also the comments at the end of this section.
Let $E$ be a separable complete metric space and $\mathcal{E}:=\mathcal{B}(E)$ its Borel sigma-algebra. Moreover, let $\Omega$ be a Polish space and $\mathcal{A}$ its Borel sigma algebra. % Note here that $(\tau_i,\zeta_i)_{i \in \mathbb{N}}$ can indeed be embedded into a Polish space. %$(\overline{\mathbb{R}}^{1+d})^{\mathbb{N}}$.
We interpret each $\zeta_i$ as a piece of information  that can be observed from time $\tau_i$ on.
As motivated in the introduction, we additionally assume that the information pieces $\zeta_i$  are possibly deleted after a finite holding time.
Therefore, we expand the marked point process $(\tau_i,\zeta_i)_{i \in \mathbb{N}}$ to $(\tau_i, \zeta_i, \sigma_i )_{i \in \mathbb{N}}$, where
\begin{itemize}
  \item the $\sigma_i:(\Omega, \mathcal{A}) \rightarrow ([0,\infty], \mathcal{B}([0,\infty]))$, $i \in \mathbb{N}$, are random times such that $\tau_i \leq  \sigma_i $.
\end{itemize}
We interpret $\sigma_i$ as the deletion time of information piece $\zeta_i$.  Note that the
random times $(\sigma_i)_{i \in \mathbb{N}}$ are in general not ordered. For the sake of a more compact notation, in the following we will work with the equivalent sequence $(T_i,Z_i)_{i \in \mathbb{N}}$ defined as
\begin{align*}
  T_{2i-1} := \tau_i, \quad T_{2i} := \sigma_i, \quad Z_{2i-1}:= \zeta_i, \quad Z_{2i}:= \zeta_i, \quad i \in \mathbb{N},
\end{align*}
i.e.~the random times $T_{2i-1}$ with odd indices refer to innovations and the consecutive random times $T_{2i}$ with even indices are  the corresponding deletion times.
We generally assume that
\begin{align}\label{FinExpJumpNo}
  \E\bigg[ \sum_{i =1}^{\infty} \mathbf{1}_{\{T_i \leq t\}} \bigg] < \infty, \quad t \geq 0,
\end{align}
which will ensure the existence of (infinitesimal) compensators. Condition \eqref{FinExpJumpNo} implies that almost surely there are at most finitely many random times on bounded intervals.
Moreover, we  assume  that
\begin{align}\begin{split}\label{NoMultiJumpsAssumption}
&T_{2i-1}(\omega) < T_{2i}(\omega), \quad \omega \in \{T_{2i} < \infty\}, \, i \in \mathbb{N},
\end{split}\end{align}
i.e.~a new piece of information is not instantaneously deleted but is available for at least a short amount of time.
Based on the sequence $(T_i,Z_i)_{i \in \mathbb{N}}$ we generate random counting measures
$\mu_I$  via
\begin{align*}
  \mu_I([0,t]\times  B  ) &:=  \mathbf{1}_{\{t \geq T_i=T_j : i, j \in I\}\cap \{T_i\neq T_j : i\in I, j \not\in I\}} \mathbf{1}_{\{Z_I \in B\}}
\end{align*}
for $t \geq 0$, $ B \in \mathcal{E}_I$, $ I\subseteq \mathbb{N}$, where
\begin{align*}
  \mathcal{E}_I:=\mathcal{B}(E_I), \quad E_I:={E}^{|I|}, \quad Z_I:=(Z_i)_{i \in I}.
\end{align*}
If the different random times $(T_i)_i$ never coincide, then we just need to consider the counting measures $\mu_{\{i\}}$, $i \in \mathbb{N}$, which describe separate arrivals of  the random times $T_i$ and their marks $Z_i$.  But if random times can occur simultaneously, then we need the full scale of  counting measures $\mu_{I}$, $ I \subseteq \mathbb{N}$, which cover all kinds of separate and joint events.
For each $ I \subseteq \mathbb{N}$, the measures $\{\mu_I(\cdot)(\omega)| \omega \in \Omega\}$  generated by their values on $[0,t] \times  B $  form a random counting measure on  $([0, \infty) \times E_I  , \mathcal{B}([0, \infty) \times  E_I ) )$, i.e.
\begin{itemize}
  \item for any fixed $A \in \mathcal{B}([0,\infty) \times E_I)$ the mapping $\omega \mapsto \mu_I ( A)(\omega)$ is measurable from $(\Omega, \mathcal{A})$ to $(\overline{\mathbb{N}}_0, \mathcal{B}(\overline{\mathbb{N}}_0))$ with $\overline{\mathbb{N}}_0:= \mathbb{N}_0 \cup \{\infty\}$,
  \item for almost each $\omega \in \Omega$ the mapping $A \mapsto \mu_I (A)(\omega)$ is a locally finite measure on $([0, \infty) \times E_I  , \mathcal{B}([0, \infty) \times E_I) )$.
\end{itemize}
The observable information at time $t \geq 0$  is given  by the complete %and right-continuous
filtration
\begin{align*}
 \mathcal{F}_t &:=  \sigma\Big( \{ T_{2i-1} \leq s < T_{2i}\} \cap \{ Z_{2i} \in B\} : s \in [0,t],  B\in \mathcal{E}, i \in \mathbb{N}\Big)   \vee \mathcal{Z},
\end{align*}
which lets the random times  $T_i$, $i \in \mathbb{N}$, be stopping times. Here the symbol '$\vee$' denotes the sigma algebra that is generated by the union of the involved sets.
 The admissible information at time $t\geq 0 $ is given by the  family of sub-sigma-algebras
\begin{align*}
 \mathcal{G}_t=\sigma\Big(  \{ T_{2i-1} \leq t < T_{2i}\} \cap \{ Z_{2i} \in B\}:  B\in \mathcal{E}, i \in \mathbb{N}\Big)   \vee \mathcal{Z}.
\end{align*}
%In terms of The random times $T_i$, $i \in \mathbb{N}$, are not necessarily stopping times with respect to $\mathcal{G}$.
The admissible information immediately before time $t> 0 $ is given by the  family of sub-sigma-algebras
\begin{align*}
 \mathcal{G}^-_{t} =\sigma\Big( \{ T_{2i-1} < t \leq T_{2i}\} \cap \{ Z_{2i} \in B\}:  B\in \mathcal{E}, i \in \mathbb{N}\Big)   \vee \mathcal{Z}.
\end{align*}
\begin{remark}\label{RemarkNonInformativeOrder}
Recall that $T_{2i-1} \leq T_{2i}$, $i \in\mathbb{N}$, is the only kind of order that we assume to hold between the random times $(T_i)_i$, resulting from the natural assumption $\tau_i \leq \sigma_i $,  $i \in\mathbb{N}$. This fact is relevant when an ordering unintentionally reveals additional information. For example, if we have a model where the innovation times $(\tau_i)_i$ are ordered, i.e.~$T_1 < T_3 < T_5 < \cdots $, then $\mathcal{G}_t$ reveals among other things the exact number of deletions that have happened until $t$.  This can be an unwanted feature if the number of past deletions is itself a non-admissible piece of information. In many situations we can avoid such an implied information effect by ordering the pairs $(T_{2i-1},T_{2i})$ in a non-informative way.
\end{remark}
\begin{remark}\label{RemarkInformRepres}
Without loss of generality suppose here that $0 \not\in E$. Then, by defining the c\`{a}dl\`{a}g process
\begin{align*}
  \Gamma_t := (Z_{2i} \mathbf{1}_{\{T_{2i-1} \leq t < T_{2i}\}})_{i \in \mathbb{N}},
\end{align*}
the information $\mathcal{G}_t$ and $\mathcal{G}^-_t$ can be alternatively  represented as
\begin{align*}
\mathcal{G}_t =& \sigma(\Gamma_t) \vee \mathcal{Z}, \quad t \geq0 ,\\
\mathcal{G}^-_t =& \sigma(\Gamma_{t-}) \vee \mathcal{Z}, \quad t >0.
\end{align*}
\end{remark}

\section{Optional projections}

In this section we study existence and path properties of optional projections. Note that this and all following sections generally assume that we are in the marked point process framework of section \ref{SectionFramework}.  Recall also our specific definition of $\mathcal{G}^-_t$.
\begin{theorem}\label{PropDefOptionalProjection}
Suppose that $X=(X_t)_{t\geq 0}$ is a c\`{a}dl\`{a}g process that satisfies
\begin{align}\label{IntegrabCond}
  \E\bigg[  \sup_{0 \leq s \leq t} | X_s| \bigg] < \infty, \quad t \geq 0.
\end{align}
Then the optional projection $X^{\mathcal{G}}$ according to Definition \ref{DefOptProj} exists, and
 we have $X^{\mathcal{G}}_{t-}=\E[ X_{t-} | \mathcal{G}^-_{t}]$  almost surely for each $t > 0$. If $X$ has integrable  variation on compacts, then $X^{\mathcal{G}}$ has paths of finite variation on compacts.
%If the process $X$ has integrable variation, then the optional projection $X^{\mathcal{G}}$ has almost surely paths of finite variation.
\end{theorem}%
It might be surprising here that $X^{\mathcal{G}}$ is indeed always a c\`{a}dl\`{a}g process, but note that condition \eqref{FinExpJumpNo} rules out clusters of jump times in our marked point process framework.
Before we turn to the proof of Theorem \ref{PropDefOptionalProjection}, we develop several auxiliary results.  Let
\begin{align*}
\mathcal{N}&:= \{ M \subset \mathbb{N} :  \# M < \infty \},\\
  \mathcal{M}&:= \{ M \subset \{1,3,5,\ldots\} :  \# M < \infty \}
\end{align*}
be  all finite subsets of the natural numbers and all finite subsets of the odd natural numbers,
and define
\begin{align*}
 %\sigma(Z_M)&:= \sigma( Z_i, i \in M) , \\
 R_I&:= (Q_I,(Z_i)_{i \in I}), \quad %M \in \mathcal{M},
  I \in \mathcal{N},
\end{align*}
where $Q_I:=\sup\{t \geq 0 : \mu_I([0,t] \times E_I)=0\} $.

Since $\Omega$  is a Polish space and $\mathcal{A}$ its Borel sigma algebra, there exist regular conditional distributions $ P( \cdot | Z_M)$ and  $ P( \cdot | Z_M,R_I)$  on $(\Omega, \mathcal{A})$  for each $M \in \mathcal{M}$ and $I \in \mathcal{N}$. As the sets $\mathcal{M}$ and $\mathcal{N}$ are countable, all these conditional distributions are simultaneously unique up to a joint exception zero set.  In this paper the notation
\begin{align*}
  P_{M,R_I}(\cdot )  = P(\,\cdot\,| Z_M,R_I)
\end{align*}
refers to an arbitrary but fixed regular version of the conditional expectation on the right hand side,
and for any integrable random variable $Z$ we set
\begin{align*}
\E_{M,R_I}[Z] := \int Z\,\d P_{M,R_I}
\end{align*}
i.e.~$\E_{M,R_I}[Z ]$ is the specific version of the conditional expectation $\E[ Z | Z_M,R_I]$ that we obtain by integrating $Z$ with respect to the specific regular versions that we picked for  $P(\,\cdot\,| Z_M,R_I) $.
 In case of $I= \emptyset$ we also use the short forms  $P_M=P_{M,R_{\emptyset}}$ and $\E_M=\E_{M,R_{\emptyset}}$ since $P_{M,R_{\emptyset}}$ is a version of $P(\cdot| Z_M)$.

Moreover, with defining $I-1:= \{i-1: i \in I\}$, the mappings
\begin{align}\label{ConventionCondProb}\begin{split}
P_{M,R_I=r}(\cdot ) :=  P(\,\cdot \,| Z_{M_I}=z, R_I=r)|_{z=Z_{M_I}}, \quad M_I := M \setminus (I \cup (I-1))
\end{split}\end{align}
refer to arbitrary but fixed regular versions of the factorized conditional expectations on the right hand side,
and for any integrable random variable $Z$ we define
\begin{align*}
  \E_{M,R_I=r}[Z] &:= \int Z\, \d P_{M,R_I=r}.
\end{align*}
By reducing  $M$ down to $M_I$  we leave out exactly those random variables in $Z_M$ that are already covered by $R_I$.
Note that the mapping  $P_{M,R_I=r}(\cdot )|_{r=R_I}$  equals $P_{M,R_I} (\cdot)$.

For $M \in \mathcal{M}$ and $t \geq 0$ we define the $\mathcal{G}_t$-measurable sets
\begin{align*}
A^M_t&:= \bigcap_{i \in M}  \{ T_i \leq t < T_{i+1}\} \cap \bigcap_{i \not\in M} (\Omega \setminus \{ T_i \leq t < T_{i+1}\})
\end{align*}
and  corresponding $\mathcal{G}$-adapted stochastic processes $\mathbb{I}^M=(\mathbb{I}_{t}^{M})_{t\geq 0}$ via
\begin{align}\label{DefIndProc}\begin{split}
  \mathbb{I}_{t}^{M} &:= \mathbf{1}_{A^M_t}, \quad t \geq 0.
  \end{split}\end{align}
Because of assumption \eqref{FinExpJumpNo} the paths of $\mathbb{I}_{t}^{M}$ have finitely many jumps on compacts only, so they have left and right limits. Moreover, by construction they are right-continuous, so the processes $\mathbb{I}^{M}$ are   c\`{a}dl\`{a}g. The left limits can be represented as $\mathbb{I}_{t-}^{M} = \mathbf{1}_{A^M_{t-}}$
where
\begin{align*}
A^M_{t-}&:= \bigcap_{i \in M}  \{ T_i < t \leq T_{i+1}\} \cap \bigcap_{i \not\in M} (\Omega \setminus \{ T_i < t \leq T_{i+1}\}).
\end{align*}
\begin{proposition} \label{DarstellungBedErwartBzglG}
For any integrable random variable  $\xi$ and any sets $M \in \mathcal{M}$ and  $I \in \mathcal{N}$
 we almost surely have
\begin{align}\label{EquivOfCondExpectations} \begin{split}
\mathbb{I}^M_t  \E [ \xi | \mathcal{G}_t \vee \sigma(R_I) ]&=    \mathbb{I}^M_t  \frac{ \E_{M,R_I} [ \xi \mathbb{I}^M_t ]}{\E_{M,R_I} [ \mathbb{I}^M_t   ]} ,\\
\mathbb{I}^M_{t-}  \E [ \xi | \mathcal{G}^-_{t} \vee \sigma(R_I) ]&=    \mathbb{I}^M_{t-}  \frac{ \E_{M,R_I}[ \xi \mathbb{I}^M_{t-} ]}{\E_{M,R_I} [ \mathbb{I}^M_{t-} ]}
\end{split}\end{align}
under the convention that $0/0:=0$.
\end{proposition}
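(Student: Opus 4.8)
The plan is to exploit the atomic structure that the indicator $\mathbb{I}^M_t$ imposes on $\mathcal{G}_t$: on the event $A^M_t=\{\mathbb{I}^M_t=1\}$ the information $\mathcal{G}_t$ reveals nothing beyond the marks $Z_M=(Z_i)_{i\in M}$ of the currently active pieces, so conditioning on $\mathcal{G}_t\vee\sigma(R_I)$ collapses to conditioning on $\sigma(Z_M,R_I)$ there. Once this is made precise, the asserted identity is just the elementary formula for a conditional expectation restricted to an atom, which I verify via the defining averaging property together with the tower rule.

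First I would pin down the structure of $\mathcal{G}_t$ on $A^M_t$. Using the c\`{a}dl\`{a}g representation $\mathcal{G}_t=\sigma(\Gamma_t)\vee\mathcal{Z}$ from Remark \ref{RemarkInformRepres} (with $0\not\in E$), on $A^M_t$ the process $\Gamma_t=(Z_{2i}\mathbf{1}_{\{T_{2i-1}\le t<T_{2i}\}})_i$ has nonzero entries exactly at the pairs $i$ with $2i-1\in M$, and there its value is $Z_{2i}=Z_{2i-1}$, i.e.\ a component of $Z_M$. Consequently there is a $\mathcal{G}_t$-measurable random vector agreeing with $Z_M$ on $A^M_t$, whence $\mathbb{I}^M_t\,f(Z_M,R_I)$ is $\mathcal{G}_t\vee\sigma(R_I)$-measurable for every measurable $f$, and the trace of $\mathcal{G}_t\vee\sigma(R_I)$ on $A^M_t$ is generated by the $\pi$-system $\{A^M_t\cap\{(Z_M,R_I)\in C\}\}$. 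In particular the right-hand side $Y:=\mathbb{I}^M_t\,\E_{M,R_I}[\xi\mathbb{I}^M_t]/\E_{M,R_I}[\mathbb{I}^M_t]$ is $\mathcal{G}_t\vee\sigma(R_I)$-measurable, since the fraction is $\sigma(Z_M,R_I)$-measurable by construction of $\E_{M,R_I}=\E[\,\cdot\mid Z_M,R_I]$.

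With measurability settled, I verify the averaging property. Because both $\mathbb{I}^M_t\E[\xi\mid\mathcal{G}_t\vee\sigma(R_I)]$ and $Y$ carry the factor $\mathbb{I}^M_t$, the identity $\E[\mathbf{1}_H\mathbb{I}^M_t\xi]=\E[\mathbf{1}_HY]$ depends only on $H\cap A^M_t$, so it suffices to test against the above $\pi$-system. For $H=A^M_t\cap\{(Z_M,R_I)\in C\}$ the tower rule over $\sigma(Z_M,R_I)$ gives $\E[\mathbf{1}_H\mathbb{I}^M_t\xi]=\E[\mathbf{1}_{\{(Z_M,R_I)\in C\}}\,\E_{M,R_I}[\xi\mathbb{I}^M_t]]$, while pulling out the $\sigma(Z_M,R_I)$-measurable fraction and using $\E[\mathbb{I}^M_t\mid Z_M,R_I]=\E_{M,R_I}[\mathbb{I}^M_t]$ yields $\E[\mathbf{1}_HY]=\E[\mathbf{1}_{\{(Z_M,R_I)\in C\}}\,\tfrac{\E_{M,R_I}[\xi\mathbb{I}^M_t]}{\E_{M,R_I}[\mathbb{I}^M_t]}\,\E_{M,R_I}[\mathbb{I}^M_t]]$. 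The two coincide: on $\{\E_{M,R_I}[\mathbb{I}^M_t]>0\}$ the factors cancel, while on $\{\E_{M,R_I}[\mathbb{I}^M_t]=0\}$ one has $\mathbb{I}^M_t=0$ $P_{M,R_I}$-a.s.\ and hence $\E_{M,R_I}[\xi\mathbb{I}^M_t]=0$, so the $0/0:=0$ convention makes both integrands vanish. A $\pi$-$\lambda$ extension then gives $\E[\mathbf{1}_H\mathbb{I}^M_t\xi]=\E[\mathbf{1}_HY]$ for all $H\in\mathcal{G}_t\vee\sigma(R_I)$, and uniqueness of the conditional expectation delivers the first identity.

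The second identity follows by the identical argument with $A^M_{t-}$, $\mathbb{I}^M_{t-}$ and $\mathcal{G}^-_t$ replacing $A^M_t$, $\mathbb{I}^M_t$ and $\mathcal{G}_t$, now reading off the atomic structure from the left-limit representation $\mathcal{G}^-_t=\sigma(\Gamma_{t-})\vee\mathcal{Z}$ of Remark \ref{RemarkInformRepres}. I expect the only step demanding care to be the first one, namely establishing that $\mathcal{G}_t$ adds no information beyond $Z_M$ on the atom $A^M_t$, i.e.\ the $\mathcal{G}_t$-measurability of $\mathbb{I}^M_t f(Z_M)$ and the identification of the trace $\sigma$-algebra; everything after that is a routine tower-rule computation.
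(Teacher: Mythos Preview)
Your proposal is correct and follows essentially the same approach as the paper: both establish that the trace of $\mathcal{G}_t\vee\sigma(R_I)$ on $A^M_t$ coincides with the trace of $\sigma(Z_M)\vee\sigma(R_I)$, use this to obtain the $(\mathcal{G}_t\vee\sigma(R_I))$-measurability of the right-hand side, and then verify the averaging property via the tower rule over $\sigma(Z_M,R_I)$. The only cosmetic difference is that you invoke the $\Gamma_t$-representation of Remark~\ref{RemarkInformRepres} and a $\pi$-$\lambda$ extension on the trace, whereas the paper states the trace identity $(\sigma(Z_M)\vee\sigma(R_I))\cap A^M_t=(\mathcal{G}_t\vee\sigma(R_I))\cap A^M_t$ directly and uses the resulting bijection $G\cap A^M_t=H\cap A^M_t$ to handle all test sets $G\in\mathcal{G}_t\vee\sigma(R_I)$ at once.
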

Note here that $\sigma(R_I)$ equals the trivial sigma-algebra if $I = \emptyset$.
Whenever  $\E_{M,R_I} [ \mathbb{I}^M_t]=0$ and  $\E_{M,R_I} [ \mathbb{I}^M_{t-} ]=0$, we necessarily have $\E_{M,R_I} [ \xi \mathbb{I}^M_t ] =0$ and $\E_{M,R_I} [ \xi \mathbb{I}^M_{t-}]=0$, respectively, so the right hand sides of \eqref{EquivOfCondExpectations} are indeed well defined.
\begin{proof}
The left hand side of \eqref{EquivOfCondExpectations} almost surely equals the conditional expectations that one obtains when the sigma-algebras $\mathcal{G}$ and $\mathcal{G}^-$ are replaced by their non-completed versions. Therefore, in the remaining proof we will ignore the extension by $\mathcal{Z}$ in the definitions of $\mathcal{G}$ and $\mathcal{G}^-$.

For each $H \in \sigma(Z_M) $ there exists a $G \in \mathcal{G}_t $ such that $ H \cap A_t^{M} = G \cap A_t^{M} $ and vice versa. Thus,
\begin{align}\label{SpurSigmaAlgebren1}
  (\sigma(Z_M) \vee \sigma(R_I)) \cap  A_t^{M} = (\mathcal{G}_t  \vee \sigma(R_I))\cap  A_t^{M} \subseteq  \mathcal{G}_t \vee  \sigma(R_I),  \quad t \geq 0.
\end{align}
This implies that the random variable $\mathbb{I}^M_t  \frac{ \\E_{M,R_I} [ \xi \mathbb{I}^M_t ]}{\E_{M,R_I}[ \mathbb{I}^M_t]}$ is $(\mathcal{G}_t\vee \sigma(R_I))$-measurable, and
for each $G \in \mathcal{G}_t \vee \sigma(R_I)$ we obtain
\begin{align*}
  \E \bigg[  \mathbf{1}_{G}  \mathbb{I}^M_t  \frac{ \E_{M,R_I} [ \xi \mathbb{I}^M_t ]}{\E_{M,R_I} [ \mathbb{I}^M_t  ]} \bigg] & =  \E \bigg[  \E_{M,R_I} \bigg[ \mathbf{1}_{H}  \mathbb{I}^M_t  \frac{ \E_{M,R_I} [ \xi \mathbb{I}^M_t ]}{\E_{M,R_I} [ \mathbb{I}^M_t ]}  \bigg] \bigg]\\
  & = \E \big[   \mathbf{1}_{H}  \E_{M,R_I} [ \xi \mathbb{I}^M_t ] \big]\\
  & = \E [   \mathbf{1}_{G}  \mathbb{I}^M_t \xi   ]\\
  & = \E \big[ \mathbf{1}_{G}  \mathbb{I}^M_t \E[   \xi | \mathcal{G}_t\vee \sigma(R_I) ]  \big],
\end{align*}
i.e.~the first equation in \eqref{EquivOfCondExpectations} holds. By replacing \eqref{SpurSigmaAlgebren1} by
\begin{align}\label{SpurSigmaAlgebren}
(\sigma(Z_M) \vee \sigma(R_I)) \cap  A_{t-}^{M} = (\mathcal{G}^-_t  \vee \sigma(R_I))\cap  A_{t-}^{M} \subseteq  \mathcal{G}^-_t \vee  \sigma(R_I),  \quad t \geq 0,
\end{align}
we can analogously show that the second equation in \eqref{EquivOfCondExpectations} holds.
%By applying the  first equation in \eqref{EquivOfCondExpectations} twice and using $\mathbb{I}^M_{t-}\mathbb{I}_{t}^{M}= \mathbb{I}_{t}^{M} \mathbf{1}_{\{\Delta \mu_t=0\}}$, we obtain
%\begin{align}\label{CondExpStep1}
%\mathbb{I}_{t}^{M} \frac{ \E_M [ \xi  \mathbb{I}^M_{t-}\mathbb{I}_{t}^{M}  ]}{\E_M [ \mathbb{I}^M_{t-} \mathbb{I}_{t}^{M}]} = \mathbb{I}_{t}^{M} \frac{ \E [ \xi \mathbf{1}_{\{\Delta \mu_t=0\}} | \mathcal{G}_t ]}{\E [ \mathbf{1}_{\{\Delta \mu_t=0\}}| \mathcal{G}_t ]}
%\end{align}
%almost surely. Let $\overline{\mathcal{G}}_{t}:=\mathcal{G}_{t} \vee \sigma(\Delta \mu_t)$. Since
%\begin{align*}
 % \mathcal{G}_t \cap \{\Delta \mu_t=k \} = \overline{\mathcal{G}}_{t} \cap \{\Delta \mu_t=k \} \subseteq   \overline{\mathcal{G}}_{t},  \quad t \geq 0, k \in \{0,1\},
%\end{align*}
%analogously to the above arguments we can show that
%\begin{align*}
%\mathbf{1}_{\{\Delta \mu_t=k\}}\E [ \xi  | \overline{\mathcal{G}}_t ] = \mathbf{1}_{\{\Delta \mu_t=k\}}\frac{ \E [ \xi \mathbf{1}_{\{\Delta \mu_t=k\}} | \mathcal{G}_t ]}{\E [ \mathbf{1}_{\{\Delta \mu_t=k\}}| \mathcal{G}_t ]}
%\end{align*}
%almost surely, i.e.~$ \E [ \xi \mathbf{1}_{\{\Delta \mu_t=k\}} | \mathcal{G}_t ]/ \E [ \mathbf{1}_{\{\Delta \mu_t=k\}}| \mathcal{G}_t ]$ is a version of $\E [ \xi  | \mathcal{G}_t,\Delta \mu_t=k ]$. Together with  \eqref{CondExpStep1} we can conclude that the third equation in \eqref{EquivOfCondExpectations} holds. The proof of the fourth equation in \eqref{EquivOfCondExpectations} is similar.
\end{proof}
\begin{lemma}\label{LemmaLimits}
For each $M \in \mathcal{M}, I \in \mathcal{N}, r \geq 0, e \in E$ and each c\`{a}dl\`{a}g process $X$ that satisfies condition \eqref{IntegrabCond}, the stochastic processes
\begin{align*}
& t \mapsto \E_{M,R_I}[X_t \mathbb{I}_{t}^{M} ],\\
& t \mapsto \E_{M,R_I=r}[X_t \mathbb{I}_{t}^{M}   ]%,\\
%& t \mapsto \E_M[X_t \mathbb{I}_{t}^{M} | S_j=u, Z_j=e]
 \end{align*}
have c\`{a}dl\`{a}g paths. Moreover, their left limits can be obtained by replacing $X_t \mathbb{I}_{t}^{M}$ by $X_{t-} \mathbb{I}_{t}^{M}$.
 \end{lemma}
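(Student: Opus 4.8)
The plan is to reduce the whole statement to a pathwise (in $\omega$) application of the dominated convergence theorem. Because $\Omega$ is polish, for each fixed $\omega$ the objects $P_{M,R_I}(\cdot)(\omega)$ and $P_{M,R_I=r}(\cdot)(\omega)$ are genuine probability measures on $(\Omega,\mathcal A)$, so that $\E_{M,R_I}[X_t\mathbb I_t^M](\omega)=\int X_t(\omega')\,\mathbb I_t^M(\omega')\,P_{M,R_I}(\d\omega')(\omega)$ is an ordinary integral against a fixed measure in the integration variable $\omega'$. The $t$-regularity of this map is then governed entirely by the $t$-regularity of the integrand $\omega'\mapsto X_t(\omega')\mathbb I_t^M(\omega')$, and ordinary (not conditional) dominated convergence may be invoked for each fixed $\omega$ separately.

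First I would fix the dominating function. For $s\in[0,N]$ we have $|X_s\mathbb I_s^M|\le\sup_{0\le u\le N}|X_u|=:G_N$ since $\mathbb I_s^M\in\{0,1\}$, and $\E[G_N]<\infty$ by \eqref{IntegrabCond}. As $\E_{M,R_I}[G_N]$ is a version of $\E[G_N\mid Z_M,R_I]$, the tower property gives $\E[\E_{M,R_I}[G_N]]=\E[G_N]<\infty$, hence $\E_{M,R_I}[G_N]<\infty$ almost surely; intersecting over $N\in\mathbb N$ produces a single null set off which $G_N$ is $P_{M,R_I}(\cdot)(\omega)$-integrable for every $N$. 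The same reasoning, applied to the factorised version, handles $P_{M,R_I=r}$. On the corresponding full-measure set of $\omega$ I would then argue pathwise.

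Next come the convergences. Since $X$ is c\`adl\`ag and $\mathbb I^M$ is right-continuous with left limits $\mathbb I_{t-}^M=\mathbf 1_{A^M_{t-}}$, for every $\omega'$ one has $X_s(\omega')\mathbb I_s^M(\omega')\to X_t(\omega')\mathbb I_t^M(\omega')$ as $s\downarrow t$ and $X_s(\omega')\mathbb I_s^M(\omega')\to X_{t-}(\omega')\mathbb I_{t-}^M(\omega')$ as $s\uparrow t$. Dominating by $G_N$ on $[0,N]$ and applying dominated convergence against the fixed measure $P_{M,R_I}(\cdot)(\omega)$ yields right-continuity of $t\mapsto\E_{M,R_I}[X_t\mathbb I_t^M]$ and the existence of its left limit, equal to $\E_{M,R_I}[X_{t-}\mathbb I_{t-}^M]$; the statement for $\E_{M,R_I=r}$ is identical.

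The one genuinely delicate point — and the step I expect to be the main obstacle — is the identification of this left limit with the claimed expression $\E_{M,R_I}[X_{t-}\mathbb I_t^M]$, since dominated convergence naturally delivers the left-limit indicator $\mathbb I_{t-}^M$ rather than $\mathbb I_t^M$. The two integrands $X_{t-}\mathbb I_{t-}^M$ and $X_{t-}\mathbb I_t^M$ differ only on the jump set $A^M_{t-}\triangle A^M_t\subseteq\bigcup_i\{T_i=t\}$ of $\mathbb I^M$, so I would complete the argument by showing that $\E_{M,R_I}[X_{t-}(\mathbb I_{t-}^M-\mathbb I_t^M)]$ vanishes, i.e.\ that for the $t$ in question $P_{M,R_I}(\cdot)(\omega)$ assigns no mass to $\{T_i=t\}$ for the indices governing $\mathbb I^M$. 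Controlling these conditional atoms — and checking that any exceptional $t$ are absorbed into the at most countably many jump times so that the two descriptions agree up to evanescence — is the crux; the c\`adl\`ag conclusion itself is comparatively routine once the pathwise dominated-convergence set-up is in place.
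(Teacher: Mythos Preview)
Your approach is exactly the paper's: the proof there is the single sentence ``Apply the Dominated Convergence Theorem,'' and your pathwise set-up with the regular conditional distributions and the majorant $G_N=\sup_{0\le u\le N}|X_u|$ is precisely how to make that sentence rigorous.

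The ``delicate point'' you isolate is not a genuine obstacle but a typo in the lemma's statement: the left limit should read $\E_{M,R_I}[X_{t-}\mathbb I_{t-}^M]$, not $\E_{M,R_I}[X_{t-}\mathbb I_{t}^M]$. Dominated convergence delivers $\mathbb I_{t-}^M$, and that is exactly what the paper uses everywhere the lemma is invoked --- see the formula $Y_{t-}=\sum_M \mathbb I_{t-}^M\,\E_M[X_{t-}\mathbb I_{t-}^M]/\E_M[\mathbb I_{t-}^M]$ in the proof of Theorem~\ref{PropDefOptionalProjection}, and the appearance of $P_{M,R_I=(u,e)}(A_{u-}^M)$ in the proof of Proposition~\ref{Lemma2Finite}. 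Your proposed fix (arguing that $P_{M,R_I}(\{T_i=t\})=0$) would fail in general, since the conditional law of $T_i$ given $R_I$ may well have atoms; fortunately no such argument is needed. Stop at the dominated-convergence step and record the left limit with $\mathbb I_{t-}^M$.
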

\begin{proof}
Apply the Dominated Convergence Theorem.
\end{proof}
%
%We say that a process is unique if it is unique up to evanescence.
%

\begin{proposition}\label{LemmaBoundedFractions}
Under the conventions $0/0:=0$ and $1/0:=\infty$, for each $M\in \mathcal{M} $  we almost surely have
\begin{align*}
\sup_{t \in [0,\infty)} \frac{\mathbb{I}_{t}^{M}}{\E_M [ \mathbb{I}^M_t ]}  < \infty.
\end{align*}
\end{proposition}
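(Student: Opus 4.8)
The plan is to exploit that both numerator and denominator are c\`{a}dl\`{a}g in $t$ and then to reduce the uniform bound to a pointwise positivity statement. By Lemma \ref{LemmaLimits} (applied with $X\equiv 1$ and $I=\emptyset$) the map $t\mapsto\E_M[\mathbb{I}_t^M]$ is c\`{a}dl\`{a}g with left limits $\E_M[\mathbb{I}_{t-}^M]$, while $\mathbb{I}^M$ is c\`{a}dl\`{a}g with only finitely many jumps on compacts. Hence, for almost every $\omega$, the random set $\{t:\mathbb{I}_t^M=1\}$ is a locally finite union of intervals $[a_k,b_k)$; for $M\neq\emptyset$ it is even contained in the bounded random interval $[\max_{i\in M}T_i,\min_{i\in M}T_{i+1})$ and consists of finitely many such pieces, so the supremum is effectively over a compact set. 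Since $\mathbb{I}_t^M/\E_M[\mathbb{I}_t^M]$ vanishes off $\{\mathbb{I}_t^M=1\}$, the claim amounts to showing that $\E_M[\mathbb{I}_t^M]$ stays bounded away from $0$ along $\{t:\mathbb{I}_t^M=1\}$.

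First I would reduce this to two pointwise statements. On each compact closure $[a_k,b_k]$ the c\`{a}dl\`{a}g function $t\mapsto\E_M[\mathbb{I}_t^M]$ can have infimum $0$ over $[a_k,b_k)$ only if there is a point $t^\ast$ with $\E_M[\mathbb{I}_{t^\ast}^M]=0$ and $t^\ast<b_k$, or with $\E_M[\mathbb{I}_{t^\ast-}^M]=0$ and $t^\ast\le b_k$; in the first case $\mathbb{I}_{t^\ast}^M=1$ and in the second $\mathbb{I}_{t^\ast-}^M=1$. Thus it suffices to show, almost surely and simultaneously for all $t$, the implications $\mathbb{I}_t^M=1\Rightarrow\E_M[\mathbb{I}_t^M]>0$ and $\mathbb{I}_{t-}^M=1\Rightarrow\E_M[\mathbb{I}_{t-}^M]>0$. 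For a single fixed $t$ both are immediate, because $\E[\mathbb{I}_t^M\,\mathbf{1}_{\{\E_M[\mathbb{I}_t^M]=0\}}]=\E[\E_M[\mathbb{I}_t^M]\,\mathbf{1}_{\{\E_M[\mathbb{I}_t^M]=0\}}]=0$ (and likewise for the left limits), which forces $\mathbb{I}_t^M=0$ wherever $\E_M[\mathbb{I}_t^M]=0$; this is exactly the pointwise content already visible from Proposition \ref{DarstellungBedErwartBzglG} with $\xi=1$.

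The main obstacle is to upgrade these two statements from ``for each fixed $t$, almost surely'' to ``almost surely, for all $t$ simultaneously'', since the exceptional null set depends on $t$ and the critical times are random and generally not stopping times. The natural device is to pass to the enlarged filtration $\mathcal{F}^M:=\mathcal{F}\vee\sigma(Z_M)$, with respect to which $\mathbb{I}^M$ is adapted and c\`{a}dl\`{a}g while $t\mapsto\E_M[\mathbb{I}_t^M]$ is $\mathcal{F}^M_0$-measurable and c\`{a}dl\`{a}g; consequently $\{(t,\omega):\mathbb{I}_t^M=1,\ \E_M[\mathbb{I}_t^M]=0\}$ is $\mathcal{F}^M$-optional and its left-limit analogue is $\mathcal{F}^M$-predictable. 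I would then apply the optional and the predictable section theorems together with the pointwise identity to try to conclude that these sets are evanescent. The delicate point — and where I expect the real work to lie — is that $\mathbb{I}^M$ is not monotone (the configuration $M$ can be left and re-entered when a surplus mark expires), so no martingale or supermartingale is available and Doob-type maximal inequalities do not apply directly; in particular the naive identity $\E[\mathbb{I}_\tau^M\mathbf{1}_{\{\tau<\infty\}}]=\E[\E_M[\mathbb{I}_\tau^M]\mathbf{1}_{\{\tau<\infty\}}]$ fails for a general section time $\tau$, so the section argument must be combined much more carefully with $\E[\mathbb{I}_t^M\mid\mathcal{F}^M_0]=\E_M[\mathbb{I}_t^M]$. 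A promising way to make the critical time tractable is to condition in addition on $R_M=(Q_M,Z_M)$ and to use the regular versions $\E_{M,R_M}$ introduced before Proposition \ref{DarstellungBedErwartBzglG}, thereby pinning down the first instant at which the configuration $M$ becomes observable and obtaining the required positivity there. Finally, for $M=\emptyset$ the set $\{t:\mathbb{I}_t^M=1\}$ need not be bounded, so the compactness step must be supplemented by a separate tail argument controlling $\E_\emptyset[\mathbb{I}_t^\emptyset]$ as $t\to\infty$.
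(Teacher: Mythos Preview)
Your reduction is sound: the supremum lives on a finite union of half-open intervals, and on each such piece the c\`{a}dl\`{a}g denominator can only have infimum zero if it (or its left limit) actually vanishes at some point of the closure, so everything hinges on ruling out $\mathbb{I}_t^M=1$ with $\E_M[\mathbb{I}_t^M]=0$ simultaneously for all $t$. But from that point on you have not closed the argument. You correctly observe that the section theorem would deliver an $\mathcal{F}^M$-stopping time $\tau$ into the bad set, yet the identity you would need, $\E[\mathbb{I}_\tau^M\mathbf{1}_{\{\tau<\infty\}}]=\E[\E_M[\mathbb{I}_\tau^M]\mathbf{1}_{\{\tau<\infty\}}]$, is exactly a freezing of the deterministic-$t$ function $\E_M[\mathbb{I}_t^M]$ at a random time, and there is no reason for it to hold unless $\tau$ is $\sigma(Z_M)$-measurable. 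Your proposed remedy of passing to $\E_{M,R_M}$ does not help here: $R_M$ pins down $Z_M$ and the common jump time of the indices in $M$, but $\mathbb{I}^M$ also depends on all the \emph{other} random times $T_j$, $j\notin M\cup(M+1)$, through the complementary clauses in $A_t^M$, so the bad section time $\tau$ remains genuinely random after that conditioning. In short, the gap you flagged is real and is not closed by the hint you give.

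The paper bypasses section theorems altogether and argues geometrically. It first conditions on $Z_M=z$, so that the denominator becomes the deterministic c\`{a}dl\`{a}g function $f(t)=P_{Z_M=z}(\tau\le t<\sigma)$ for the entry and exit times $\tau,\sigma$ of $\mathbb{I}^M$. The event ``the ratio explodes'' forces $f$ (or its left limit) to vanish at some $u\in[\tau(\omega),\sigma(\omega)]$; each such $u$ says that the rectangle $A_{(u,u)}=\{t'\le u<s'\}$ has $P_{Z_M=z}((\tau,\sigma)\in A_{(u,u)})=0$ while $(\tau(\omega),\sigma(\omega))$ lies in or on its boundary. The union $B$ of all zero-probability rectangles $A_{(t,s)}$ with rational corners is a countable union of null sets; its boundary $\partial B$ is hit by each anti-diagonal line $L_x$ in at most one point, so the additional boundary contribution is again a countable union of null sets. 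A short case analysis (interior zero, boundary zero, left-limit analogues) then places $(\tau(\omega),\sigma(\omega))$ in one of four countably generated null sets. Finally one integrates over $z$. This argument exploits only the joint law of the two random endpoints and the c\`{a}dl\`{a}g structure of $f$, and it is where the real work lies; nothing like it is present in your proposal.
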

\begin{proof}
Let $\tau$ and $\sigma$ be any two non-negative random times such that $\tau \leq \sigma$. At first we are going to show that
\begin{align}\label{BoundedQuotient1}
Z:= \sup_{t \in [0,\infty)} \frac{\mathbf{1}_{\{\tau \leq t < \sigma \}}}{\E [ \mathbf{1}_{\{\tau \leq t < \sigma \}} ]}  < \infty
\end{align}
almost surely. For $(t,s) \in [0,\infty)^2$ we define the unbounded rectangles $A_{(t,s)}:= \{ (t',s') : t'\leq t, s< s'\}$ and the countably generated set
\begin{align*}
  B&:= \bigcup_{(t,s) \in \beta} A_{(t,s)}, \quad \beta:=\Big\{(t,s) \in \mathbb{Q}_+^2 : t<s, P((\tau,\sigma) \in A_{(t,s)})=0\Big\} .
\end{align*}
 Let $\partial B$ and $B^{\circ}$ be the boundary and the interior of $B$. Any line of the form $L_x:= \{ (x,x)+ \lambda \, (1,-1): \lambda \in \mathbb{R}\}$ intersects $\partial B$ at most at one point, since for any two points $y,y' \in L_x$, $y\neq y'$ we either have $y \in A_{y'}^{\circ}$ or $y' \in A_{y}^{\circ}$. Therefore,  the set
\begin{align*}
\gamma:=\bigg(\bigcup_{x \in \mathbb{Q}_+} L_x\bigg)  \cap \partial B \cap \Big\{ (t,s) \in [0,\infty)^2 : P((\tau,\sigma) \in A_{(t,s)})=0\Big\}
\end{align*}
is countable, and
\begin{align*}
  C&:= \bigcup_{(t,s) \in \gamma} A_{(t,s)}
\end{align*}
is countably generated. The sets  $N_B=\{(\tau,\sigma) \in B\}$ and $N_C=\{(\tau,\sigma) \in C\}$  are both null sets since they equal countable unions of null sets.

Suppose now that $Z(\omega) = \infty$ for an arbitrary but fixed $\omega \in \Omega$. We necessarily have $\tau(\omega) < \sigma(\omega)$.
Since $t \mapsto \E [ \mathbf{1}_{\{\tau \leq t < \sigma \}} ]$ is a c\`{a}dl\`{a}g function,  at least one of the following statements is true:
\begin{enumerate}[(1)]
  \item $\E [ \mathbf{1}_{\{\tau \leq u < \sigma \}} ]=0$  for some $u \in (\tau(\omega),\sigma(\omega))$,
  \item $\E [ \mathbf{1}_{\{\tau < u \leq \sigma \}} ]=0$ for some $u \in (\tau(\omega),\sigma(\omega))$,
  \item $\E [ \mathbf{1}_{\{\tau \leq u < \sigma \}} ]=0$  for  $u = \tau(\omega)$,
  \item $\E [ \mathbf{1}_{\{\tau < u \leq \sigma \}} ]=0$  for $u = \sigma(\omega)$.
\end{enumerate}

In case (1)  we have $P((\tau,\sigma)\in A_{(u,u)}) =\E [ \mathbf{1}_{\{\tau \leq u < \sigma \}} ] =0$ and $(\tau(\omega),\sigma(\omega)) \in A_{(u,u)}^{\circ}$, such that can conclude that $\omega \in N_B$.

In case (2) we can argue analogously to case (1), but we need to replace the definition of $A_{(t,s)}$ by $ \{ (t',s') : t'< t, s\leq s'\}$ and define a corresponding null set  $N_B'$. We obtain that  $\omega \in N_B'$.

In case  (3) we have $P((\tau,\sigma)\in A_{(\tau(\omega),\tau(\omega))})=\E [ \mathbf{1}_{\{\tau \leq \tau(\omega) < \sigma \}} ] =0$ and $(\tau(\omega),\sigma(\omega)) \in A_{(\tau(\omega),\tau(\omega))} \subset B  \cup \partial B$. If $(\tau(\omega),\sigma(\omega)) \in B$, then $\omega \in N_B$. If $(\tau(\omega),\sigma(\omega)) \in \partial B $, then the whole line segment $ \{ \theta (\tau(\omega), \tau(\omega)) + (1-\theta) (\tau(\omega), \sigma(\omega)) : \theta  \in (0,1)\}$ is in $\partial B$, because of $(\tau(\omega), \tau(\omega)) \in \partial B$ and the rectangular shape of the sets $A_{(t,s)}$. On this line there is at least one intersection with  $C$, such that we can conclude that $\omega \in N_C$.

In case (4) we can argue similarly to case (3), but we need to replace the definition of $A_{(t,s)}$ by $ \{ (t',s') : t'< t, s\leq s'\}$ and define corresponding null sets  $N_B'$ and $N_C'$.

All in all, we have $P(Y=\infty) \leq P(N_B \cup N_C\cup N_B' \cup N_C')=0$, i.e.~equation \eqref{BoundedQuotient1} holds.

Now, let $M \in \mathcal{M}$ be arbitrary but fixed and choose $\tau$ and $\sigma$ as the random times where $\mathbb{I}_{t}^M$ jumps from zero to one and jumps back to zero, respectively. Suppose that  $P_{Z_M=z}$ is a regular version of $P( \cdot | Z_M = z)$ and  $\E_ {Z_M=z}[\cdot]$ its  corresponding expectation.  Then from \eqref{BoundedQuotient1} we can conclude that
\begin{align*}
P_{Z_M=z}\bigg( \sup_{t \in [0,\infty)} \frac{\mathbb{I}_{t}^{M}}{\E_{Z_M=z} [ \mathbb{I}_{t}^{M} ]}  = \infty \bigg) =0
\end{align*}
for each choice of $z$. Replacing both $z$ by $Z_M$, where we use the insertion rule for conditional expectations for the inner $z$,  and  taking the unconditional expectation on both hand sides of the equation, we end up with
\begin{align*}
P\bigg( \sup_{t \in [0,\infty)} \frac{\mathbb{I}_{t}^{M}}{\E_{Z_M} [ \mathbb{I}_{t}^{M} ]}  = \infty \bigg) =0.
\end{align*}
\end{proof}
\begin{proof}[Proof of Theorem \ref{PropDefOptionalProjection}]
%We may replace $(X_t)_{t \geq 0}$ by $(\widetilde{X}_t)_{t \geq 0} :=(\E[ X_t | \mathcal{F}_{\infty}])_{t \geq 0}$ since $\mathcal{G}_t \subset \mathcal{F}_{\infty}$ for all $t$ and since $t \mapsto \E[ X_t | \mathcal{F}_{\infty}]$ is also c\`{a}dl\`{a}g according to the dominated convergence theorem.
Motivated by Proposition \ref{DarstellungBedErwartBzglG}, we set
\begin{align*}
Y_t:= \sum_{M \in  \mathcal{M}}  \mathbb{I}^M_t  \frac{ \E_M [ X_t \mathbb{I}^M_t ]}{\E_M [ \mathbb{I}^M_t ]}, \quad t \geq 0,
\end{align*}
since this process almost surely equals $X^{\mathcal{G}}_t$ for each $t \geq 0$.
Note that there are at most a countable number of conditional expectations involved, so the corresponding regular versions are simultaneously unique up to evanescence.
For each compact interval  $[0,t]$ and  almost each $\omega \in \Omega$,
the set
\begin{align}\label{DefOfMt}
\mathcal{M}_t(\omega):= \{ M \in \mathcal{M}: \mathbb{I}^M_{u}(\omega) =1 \textrm{ for at least one }u \in [0,t]\}
\end{align}
is finite  because of assumption \eqref{FinExpJumpNo}. In case of $\E_M [ \mathbb{I}^M_t  ](\omega)\neq 0$ Lemma \ref{LemmaLimits} yields that
\begin{align}\label{RechtsStetigkeitDomConv}\begin{split}
  \lim_{\varepsilon \downarrow 0} Y_{t+\varepsilon}(\omega)  =   & \sum_{M \in \mathcal{M}_{t+1}(\omega)}  \lim_{\varepsilon \downarrow 0}
   \mathbb{I}^M_{t+\varepsilon}(\omega)   \frac{ \E_M [ X_{t+\varepsilon} \mathbb{I}^M_{t+\varepsilon} ](\omega)}{\E_M [ \mathbb{I}^M_{t+\varepsilon} ](\omega)}   \\
     &=\sum_{M \in \mathcal{M}_{t+1}(\omega)}  \mathbb{I}^M_t(\omega)  \frac{ \E_M [ X_t \mathbb{I}^M_t ](\omega)}{\E_M [ \mathbb{I}^M_t  ](\omega)}\\
     & = Y_t(\omega).
\end{split} \end{align}
 In case of $\E_M [ \mathbb{I}^M_t  ](\omega)=0$, Proposition \ref{LemmaBoundedFractions} implies that
$\mathbb{I}^M_{t}=0$  for almost all $\omega \in \Omega$, where the exception zero set does not depend on the choice of $t$.  So \eqref{RechtsStetigkeitDomConv} is almost surely  true on $[0,\infty)$ since $\mathbb{I}^M_{t}(\omega)=0$ implies that there is a whole interval $[t,t+\epsilon_{\omega})$ where the right-continuous jump path $s \mapsto \mathbb{I}^M_{s}(\omega)$  is constantly zero. Similarly, we can show that the process $Y$ almost surely has left limits, which are of the form
\begin{align*}
Y_{t-}= \sum_{M \in  \mathcal{M}}  \mathbb{I}^M_{t-}  \frac{ \E_M [ X_{t-} \mathbb{I}^M_{t-} ]}{\E_M [ \mathbb{I}^M_{t-} ]}, \quad t > 0.
\end{align*}
According to Proposition \ref{DarstellungBedErwartBzglG}  $Y_{t-}$ almost surely equals
 $\E[ X_{t-} | \mathcal{G}^-_{t}]$.
As c\`{a}dl\`{a}g processes are uniquely defined by their values on separable subsets of the time line, our choice for $X^{\mathcal{G}}$ is almost surely the only possible modification of $(\E[X_t | \mathcal{G}_t])_{t\geq 0}$.

The  variation of $Y$ on $[0,t]$ is  bounded by
\begin{align*}
  &\sum_{M \in  \mathcal{M}_t} \sup_{\mathcal{T}}\sum_{\mathcal{T}} \bigg|   \mathbb{I}^M_{t_{k+1}}  \frac{ \E_M [ X_{t_{k+1}} \mathbb{I}^M_{t_{k+1}} ]}{\E_M [ \mathbb{I}^M_{t_{k+1}} ]} -\mathbb{I}^M_{t_{k}}  \frac{ \E_M [ X_{t_{k}} \mathbb{I}^M_{t_{k}} ]}{\E_M [ \mathbb{I}^M_{t_{k}} ]} \bigg|\\
  &\leq \sum_{M \in  \mathcal{M}_t} \sup_{\mathcal{T}}\sum_{\mathcal{T}} \bigg(
    \bigg| \frac{ \mathbb{I}^M_{t_{k+1}}}{\E_M [ \mathbb{I}^M_{t_{k+1}} ]}-\frac{ \mathbb{I}^M_{t_{k}}}{\E_M [ \mathbb{I}^M_{t_{k}} ]}\bigg|   \E_M [ |X_{t_{k+1}}| \mathbb{I}^M_{t_{k+1}}] \\
  & \qquad \qquad \qquad\qquad +     \frac{ \mathbb{I}^M_{t_{k}}}{\E_M [ \mathbb{I}^M_{t_{k}} ]}\E_M [ |X_{t_{k+1}} \mathbb{I}^M_{t_{k+1}}- X_{t_{k}} \mathbb{I}^M_{t_{k}}| ] \bigg),
\end{align*}
where $\mathcal{T}$ is any partition $0=t_0 < \cdots < t_n=t$ of the interval $[0,t]$.
Since  $C_M(\omega):= \sup_t \mathbb{I}^M_{t}(\omega)/\E_M [ \mathbb{I}^M_{t} ](\omega)$ is finite for almost each $\omega \in \Omega$, see Proposition \ref{LemmaBoundedFractions},  and the variation of $L_M(s):= E_M[\mathbb{I}^M_{s}] $  is bounded by $2$, the latter bound is dominated by
\begin{align*}
 &  \sum_{M \in  \mathcal{M}_t} \bigg(\Big( 2 C_M + \int_{[0,t]} \mathbb{I}^M_{s} \frac{1}{L_M(s)L_M(s-)}  \d |L_M|(s) \Big) \E_M \Big[\sup_{0\leq s \leq t} |X_s|\Big] \\
 & \qquad \qquad + C_M \E_M \Big[2\, \sup_{0\leq s \leq t} |X_s| + \int_{[0,t]} \mathbb{I}^M_{s} \d |X|_s \Big] \bigg)\\
 & \leq  \sum_{M \in  \mathcal{M}_t} \bigg( \big( 2 C_M +  2\,t \, C^2_M \big)  \E_M \Big[ \int_{[0,t]} \d |X|_s \Big] + 3 \, C_M \E_M \Big[\int_{[0,t]} \d |X|_s \Big]\bigg),
\end{align*}
which is finite for almost each $\omega \in \Omega$, since $X$ has integrable variation on compacts and since $\mathcal{M}_t(\omega)$ is finite.
\end{proof}

\section{Infinitesimal compensators}
In this section we derive infinitesimal compensators for a large class of incrementally adapted jump processes, including the counting processes  $t \mapsto \mu_I([0,t]\times B)$ for any $I \in \mathcal{N}$ and $B\in \mathcal{E}_I$.  Under the conventions $0/0:=0$  and \eqref{ConventionCondProb} let
\begin{align*}
  &\nu_I ([0,t] \times B )
   :=  \sum_{M \in \mathcal{M}} \int_{(0,t] \times B}  \mathbb{I}^M_{u-}  \frac{ P_{M,R_{I}=(u,e)}(A_{u-}^{M})}{P_M(A_{u-}^{M}) }   P_M^{R_I}(\d (u,e) ), \\
    & \rho_I([0,t] \times B ) :=  \sum_{M \in \mathcal{M}} \int_{(0,t] \times B}  \mathbb{I}^M_{u}  \frac{ P_{M,R_{I}=(u,e)}(A_{u}^{M})}{P_M(A_{u}^{M}) }   P_M^{R_I}(\d (u,e) )
\end{align*}
for $ t \geq 0$, $B \in \mathcal{E}_I$, $I \in \mathcal{N}$.
%Moreover, we define corresponding positive and negative parts
%\begin{align*}
%  \nu^+([0,t] \times B \times \{j\})&:=  \nu\big([0,t] \times B \times (\{j\} \cap \{1,3,5,\ldots\}) \big),\\
%  \nu^-([0,t] \times B \times \{j\})&:=  -\nu\big([0,t] \times B \times (\{j\} \cap \{2,4,6,\ldots\}) \big),\\
%  \rho^+([0,t] \times B \times \{j\})&:=  \rho\big([0,t] \times B \times (\{j\} \cap \{1,3,5,\ldots\}) \big),\\
%  \rho^-([0,t] \times B \times \{j\})&:=  -\rho\big([0,t] \times B \times (\{j\} \cap \{2,4,6,\ldots\}) \big)
%\end{align*}
%such that $\nu= \nu^+-\nu^-$ and $\rho= \rho^+-\rho^-$.
%
\begin{proposition}\label{Lemma2Finite}
For each $I  \in \mathbb{N}$ the  mappings $\nu_I$ and $\rho_I$  can be uniquely extended to random measures on $([0, \infty) \times E_I, \mathcal{B}([0, \infty) \times  E_I))$.
\end{proposition}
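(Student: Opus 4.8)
\section*{Proof proposal}

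The plan is to show that, for almost every $\omega$, the values prescribed on the rectangles $[0,t]\times B$ are the restriction of a single locally finite Borel measure, obtained by reading the defining formula with $(0,t]\times B$ replaced by an arbitrary $A\in\mathcal{B}([0,\infty)\times E_I)$. Concretely I would introduce, for each $M\in\mathcal{M}$, the per-configuration set function
\[
\nu_I^M(A):=\int_A \mathbb{I}^M_{u-}\,\frac{P_{M,R_I=(u,e)}(A^M_{u-})}{P_M(A^M_{u-})}\,P_M^{R_I}(\d(u,e)),\qquad A\in\mathcal{B}([0,\infty)\times E_I),
\]
and set $\nu_I:=\sum_{M\in\mathcal{M}}\nu_I^M$, with $\rho_I^M,\rho_I$ defined analogously after replacing $\mathbb{I}^M_{u-},A^M_{u-}$ by $\mathbb{I}^M_u,A^M_u$. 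Since $\mathcal{M}$ is countable it suffices to treat one $M$ and then add.

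The technical core, and the step I expect to be the main obstacle, is to verify that the integrand
\[
g^M(u,e):=\mathbb{I}^M_{u-}\,\frac{P_{M,R_I=(u,e)}(A^M_{u-})}{P_M(A^M_{u-})}
\]
is a nonnegative, jointly $\mathcal{B}([0,\infty)\times E_I)\otimes\mathcal{A}$-measurable function of $(u,e)$ and $\omega$. The delicate point is that $u$ enters twice: as the time coordinate of the conditioning value in the regular conditional probability $P_{M,R_I=(u,e)}$, and as the time parameter of the event $A^M_{u-}$ whose probability is taken. I would first record that, by the factorized construction \eqref{ConventionCondProb}, the map $(r,\omega)\mapsto P_{M,R_I=r}(F)(\omega)$ is a measurable kernel for each fixed $F\in\mathcal{A}$, and that $(u,\omega)\mapsto\mathbb{I}^M_{u-}(\omega)=\mathbf{1}_{A^M_{u-}}(\omega)$ is jointly measurable because $\mathbb{I}^M$ is c\`{a}dl\`{a}g and $\mathcal{G}$-adapted (its left-limit version is left-continuous and adapted). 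A monotone-class argument then upgrades this to joint measurability of $(r,u,\omega)\mapsto P_{M,R_I=r}(A^M_{u-})(\omega)$, and restriction to the diagonal $r=(u,e)$ yields measurability of $g^M$; the denominator is measurable as well, and the convention $0/0:=0$ causes no difficulty on the measurable set where it vanishes.

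With measurability in hand, each $\nu_I^M(\cdot)(\omega)$ is a measure, being the integral of a nonnegative function against the probability measure $P_M^{R_I}$, and $\nu_I=\sum_M\nu_I^M$ is a measure by monotone convergence. For local finiteness I would use that, for $u\le t$, only the configurations $M\in\mathcal{M}_t(\omega)$ from \eqref{DefOfMt} contribute, since $\mathbb{I}^M_{u-}=0$ otherwise, and $\mathcal{M}_t(\omega)$ is finite by \eqref{FinExpJumpNo}. On each such piece, using $P_{M,R_I=(u,e)}(A^M_{u-})\le 1$ and $0/0:=0$, the integrand is bounded by $C_M(\omega):=\sup_u \mathbb{I}^M_{u-}(\omega)/P_M(A^M_{u-})(\omega)$, which is finite almost surely by Proposition \ref{LemmaBoundedFractions} (whose proof already covers the left-limit sets $A^M_{u-}$, recalling $\E_M[\mathbb{I}^M_{u-}]=P_M(A^M_{u-})$); hence $\nu_I([0,t]\times E_I)(\omega)\le\sum_{M\in\mathcal{M}_t(\omega)}C_M(\omega)<\infty$. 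Measurability of $\omega\mapsto\nu_I(A)(\omega)$ for fixed $A$ follows from the joint measurability of $g^M$ together with Tonelli, so $\nu_I$ is a genuine random measure; $\rho_I$ is treated identically.

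Finally, for uniqueness of the extension I would note that $\{[0,t]\times B:t\ge 0,\ B\in\mathcal{E}_I\}$ is an intersection-stable generator of $\mathcal{B}([0,\infty)\times E_I)$ on which the values are already prescribed, and that $[0,n]\times E_I\uparrow[0,\infty)\times E_I$ with finite $\nu_I$- and $\rho_I$-measure by the local finiteness just established. The uniqueness theorem for measures that agree on a generating $\pi$-system and are finite along such an exhausting sequence then forces any two extensions to coincide, completing the argument.
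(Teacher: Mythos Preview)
Your proposal is correct and follows essentially the same route as the paper: decompose $\nu_I$ and $\rho_I$ over $M\in\mathcal{M}$, establish joint measurability of the integrand, and obtain local finiteness by bounding $P_{M,R_I=(u,e)}(A^M_{u-})\le 1$ and invoking Proposition~\ref{LemmaBoundedFractions}. The only tactical differences are that the paper argues measurability via the left-continuity in Lemma~\ref{LemmaLimits} rather than a monotone-class argument, and it cites Lemma~\ref{FiniteDIstrOfR} for the finiteness of $P_M^{R_I}$ on compacts where you simply use that $P_M^{R_I}$ is a (sub-)probability measure together with the finiteness of $\mathcal{M}_t(\omega)$.
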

The proof of the proposition is given below.

{\color{black}
In the following we use the short notation
\begin{align*}
F.\kappa\,((0,t]\times B) := \int_{(0,t ]\times B} F(u,e) \,\kappa (\d (u,e))
\end{align*}
for random measures $\kappa$ and integrable random functions $F$.
\begin{theorem}\label{GeneralCompOfJumpProc}
Suppose that the  mappings $(t,e,\omega) \mapsto F_I(t,e)(\omega)$, $I \in \mathcal{N}$, are jointly measurable and satisfy
\begin{align}\label{IntegrCondJumpProcess}
\E\bigg[ \int_{(0,t ]\times E_I }  |F_I(u,e)| \, \mu_I (\d (u,e)) \bigg] < \infty.
\end{align}
 If $F_I(t,e)$ is $\mathcal{G}_t^-$-measurable for each $(t,e)$, then for each  $ B \in \mathcal{E}_I$ the
 jump process
\begin{align*}
&t \mapsto  F_I.\mu_I((0,t]\times B)
\end{align*}
is an IF-martingale with IF-compensator
\begin{align*}
&t \mapsto   F_I.\nu_I((0,t]\times B).
\end{align*}
 If $F_I(t,e)$ is $\mathcal{G}_t$-measurable for each $(t,e)$, then for each  $ B \in \mathcal{E}_I$ the
 jump process
\begin{align*}
&t \mapsto  F_I.\mu_I((0,t]\times B)
\end{align*}
is an IB-martingale with IB-compensator
\begin{align*}
&t \mapsto   F_I.\rho_I((0,t]\times B).
\end{align*}
\end{theorem}
}

By choosing $G_I\equiv 1$ and $G_{I'}\equiv 0$ for $I'\neq I$, Theorem \ref{GeneralCompOfJumpProc} yields in particular that $\nu_I$ is the IF-compensator and $\rho_I$ is the IB-compensator of the counting process $\mu_I$.  In intuitive notation we write this fact as
\begin{align*}
  \E[ \mu_I( \d t \times B) | \mathcal{G}^-_{t}] &= \nu_I( \d t \times B),\\
  \E[ \mu_I( \d t \times B) | \mathcal{G}_{t}] &= \rho_I( \d t \times B), \quad  B \in \mathcal{E}_I .
\end{align*}

The proofs of Proposition \ref{Lemma2Finite} and Theorem \ref{GeneralCompOfJumpProc} follow now in several steps.
\begin{lemma}\label{FiniteDIstrOfR}
For each $M \in \mathcal{M}$ and $t \geq 0$  we almost surely have
\begin{align}\begin{split}\label{Finite1A}
 \sum_{I \in \mathcal{N}} \int_{[0,t]\times E_I} P_M^{R_I}(\d (u,e) ) < \infty.
\end{split}\end{align}
\end{lemma}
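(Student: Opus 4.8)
The plan is to collapse each integral to a conditional probability and then reduce the countable sum to a single integrability statement. Since $P_M^{R_I}$ is the law of $R_I=(Q_I,(Z_i)_{i\in I})$ under $P_M$ and the $E_I$-component of the integration domain is always met, the $I$-th integral is just
\[
\int_{[0,t]\times E_I} P_M^{R_I}(\d(u,e)) = P_M(Q_I\le t) = \E_M[\mathbf 1_{\{Q_I\le t\}}].
\]
Thus the left-hand side of \eqref{Finite1A} equals $\sum_{I\in\mathcal N}\E_M[\mathbf 1_{\{Q_I\le t\}}]$, and because all summands are nonnegative the conditional monotone convergence theorem allows me to pull the sum inside, giving $\E_M[W_t]$ with $W_t:=\sum_{I\in\mathcal N}\mathbf 1_{\{Q_I\le t\}}$.

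Next I would prove the pathwise bound $W_t\le N_t+1$, where $N_t:=\sum_{i=1}^\infty\mathbf 1_{\{T_i\le t\}}$. The key observation is that $Q_I\le t$ can hold only when $I$ is exactly a maximal group of indices sharing one common arrival time $\le t$: by construction $\mu_I$ records a point precisely when $T_i=T_j$ for all $i,j\in I$ while $T_i\neq T_j$ for $i\in I$, $j\notin I$. Such maximal equal-time groups are pairwise disjoint nonempty subsets of $\{i:T_i\le t\}$, so their number is at most $N_t$; the extra $+1$ absorbs the degenerate set $I=\emptyset$.

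Finally I would combine the two steps. Assumption \eqref{FinExpJumpNo} gives $\E[N_t]<\infty$, so $N_t+1$ is integrable and
\[
\E_M[W_t]\le \E_M[N_t+1] = \E[N_t\mid Z_M]+1 .
\]
The conditional expectation of a nonnegative integrable random variable is almost surely finite, which yields $\sum_{I\in\mathcal N}\E_M[\mathbf 1_{\{Q_I\le t\}}]<\infty$ almost surely, as asserted.

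I expect the main obstacle to be the legitimacy of the reduction to $\E_M[W_t]$ rather than any individual estimate. One must justify interchanging the countable sum over $\mathcal N$ with the fixed regular conditional expectation $\E_M$ (this is exactly where conditional monotone convergence enters), and one must resist the temptation to argue finiteness from the pathwise fact that only finitely many $Q_I(\omega)$ are $\le t$: that alone does \emph{not} bound a sum of conditional probabilities, each of which may be strictly positive for infinitely many $I$. It is the integrability input from \eqref{FinExpJumpNo} that actually closes the argument.
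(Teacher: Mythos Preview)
Your proof is correct and follows essentially the same route as the paper: both rewrite the integrals as $\E_M[\mu_I([0,t]\times E_I)]=\E_M[\mathbf 1_{\{Q_I\le t\}}]$, use monotone convergence to pull the countable sum inside $\E_M$, bound the resulting pathwise sum by $N_t=\sum_i\mathbf 1_{\{T_i\le t\}}$ via the disjointness of maximal equal-time groups, and then invoke assumption \eqref{FinExpJumpNo} to conclude that the conditional expectation is almost surely finite. Your version is slightly more explicit about the $I=\emptyset$ contribution and about why pathwise finiteness alone would not suffice.
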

\begin{proof}
For each $s \geq 0$ and $M \in \mathcal{M}$ assumption \eqref{FinExpJumpNo} implies that
\begin{align}\label{finiteExpect}
\E_M \bigg[ \sum_{j=1}^{\infty}  \mathbf{1}_{\{T_j \leq s\}} \bigg]  < \infty
\end{align}
almost surely.  Therefore, by applying the  Monotone Convergence Theorem we obtain
\begin{align}\begin{split}\label{Finite1A}
 \infty > \E_M \bigg[ \sum_{i=1}^{\infty}  \mathbf{1}_{\{T_i \leq s\}} \bigg] %&= \E_M \bigg[ \sum_{I \in \mathcal{N}}^{\infty} |I| \, \mu([0,s]\times E_I)  \bigg](\omega)\\
 &\geq \E_M \bigg[ \sum_{I \in \mathcal{N}}  \mu_I([0,s]\times E_I)  \bigg] \\
  & =   \sum_{I \in \mathcal{N}} \E_M \bigg[  \mu_I([0,s]\times E_I)  \bigg]\\
  &=  \sum_{I \in \mathcal{N}}  \int_{[0,s]\times E_I } P_M^{R_I}(\d (u,e) )
\end{split}\end{align}
almost surely for each $M \in \mathcal{M}$ and $s \geq 0$.
\end{proof}
\begin{proof}[Proof of Proposition \ref{Lemma2Finite}]
The processes $\mathbb{I}^M_{u-}$ and $P_M(A^M_{u-})$ are jointly measurable with respect to $(u,\omega)$, since $\mathbb{I}^M_{u-}$ and $P_M(A^M_{u-})$ are left-continuous in $u$, see Lemma \ref{LemmaLimits}. The mapping $P_{M,R_{I}=(u,e)}(A_{u-}^{M})$ is  jointly measurable with respect to $(u,e,\omega)$ since $P_{M,R_{I}=(u,e)}(A_{s-}^{M})$ is left-continuous in $s$ and jointly measurable with respect to $(u,e,\omega)\in [0,\infty)^{|I|}\times E_I\times \Omega$, see Lemma \ref{LemmaLimits}. Thus, for any fixed $A \in \mathcal{B}([0,\infty) \times E_I)$ the mapping $\omega \mapsto \nu_I ( A)(\omega)$ is measurable. Moreover, for almost each $\omega \in \Omega$ the mapping $A \mapsto \nu_I (A)(\omega)$  is a locally finite measure on $([0, \infty) \times E_I  , \mathcal{B}([0, \infty) \times E_I ))$.
This can be seen by combining Proposition \ref{LemmaBoundedFractions} and equation \eqref{Finite1A} and using the fact that $P_{M, R_{I}=(u,e)}(A_{u-}^{M})$ is bounded by $1$. Hence, $\nu_I$ has a unique extension to a
random measure on $([0, \infty) \times E_I  , \mathcal{B}([0, \infty) \times  E_I))$. Similar conclusions hold for the mappings $\rho_I$.
\end{proof}
{\color{black}
\begin{proposition}\label{CompensatorPerConstruction}
Suppose that the mappings $(t,e,\omega) \mapsto F_I(t,e)(\omega)$, $I \in \mathcal{N}$, are jointly measurable and satisfy \eqref{IntegrCondJumpProcess}. For each $t >0$ and $B  \in \mathcal{E}_I $  we almost surely have
\begin{align*}%\label{Proof1CompensPerConstr}
&\lim_{n \rightarrow \infty} \sum_{\mathcal{T}_n} \E \big[F_I.\mu_I((t_k,t_{k+1}]\times B)  \big| \mathcal{G}_{t_k} \big]=G_I.\nu_I((0,t]\times B) ,\\
&\lim_{n \rightarrow \infty} \sum_{\mathcal{T}_n} \E \big[F_I.\mu_I((t_k,t_{k+1}]\times B)  \big| \mathcal{G}_{t_{k+1}} \big]=H_I.\rho_I((0,t]\times B)
\end{align*}
for any increasing sequence of partitions $(\mathcal{T}_n)_{n\in \mathbb{N}}$ of $[0,t]$ with $\lim_{n \rightarrow \infty} |\mathcal{T}_n|=0$ and for $G_I$ and $H_I$ defined by
\begin{align*}
G_I(u,e) &:= \sum_{M \in \mathcal{M}} \mathbb{I}^M_{u-}  \frac{\E_{M,R_I=(u,e)}[ \mathbb{I}^M_{u-} F_I(u,e) ]
}{\E_{M,R_I=(u,e)}[ \mathbb{I}^M_{u-} ]},\\
H_I(u,e) &:= \sum_{M \in \mathcal{M}} \mathbb{I}^M_{u}  \frac{\E_{M,R_I=(u,e)}[ \mathbb{I}^M_{u} F_I(u,e) ] }{\E_{M,R_I=(u,e)}[ \mathbb{I}^M_{u} ]}.
\end{align*}
\end{proposition}
}
\begin{proof}
By decomposing  $F$ into a positive part $F^+$ and a negative part $F^-$, it suffices to prove the first equation for the non-negative mappings $F^+$ and $F^-$ only.  Therefore, without loss of generality we suppose from now on that $F$ is non-negative.

Let $\mathcal{M}_t=\mathcal{M}_t(\omega)$ be defined as in \eqref{DefOfMt}. In the following we use the short notation $J_k:=(t_k,t_{k+1}]$.
Since $\sum_{M \in \mathcal{M}_t} \mathbb{I}^M_{t_k}=1$ for any $t_k$,  by applying \eqref{EquivOfCondExpectations}, the Monotone Convergence Theorem and the Law of Total Probability we obtain
\begin{align*}
& \E \big[ F_I.\mu_I(J_k \times B )  \big| \mathcal{G}_{t_k} \big]\\
 & = \sum_{M \in \mathcal{M}_t} \mathbb{I}^M_{t_k}
\frac{ \E_M [  \mathbb{I}^M_{t_k}  F_I.\mu_I(J_k \times B )   ]}{\E_M [ \mathbb{I}^M_{t_k} ]} \\
%& = \sum_{M \in \mathcal{M}_t} \frac{\mathbb{I}^M_{t_k}}{\E_M [ \mathbb{I}^M_{t_k} ]}\sum_{j \in N} \E_M [  \mathbb{I}^M_{t_k}  G_I.\mu_I(J_k \times B  )   ] \\
& = \sum_{M \in \mathcal{M}_t}  \int_{J_k\times E_I } \mathbb{I}^M_{t_k} \frac{\E_{M,R_I=(u,e) } [ \mathbb{I}^M_{t_k} F_I.\mu_I(J_k \times B )   ]  }{P_M(A_{t_k}^{M}) }  P^{R_{I}}_M( \d (u,e) )
\end{align*}
for  almost each $\omega \in \Omega$.
For $u \in (0,t]$ let $J^u$ be the unique interval $(t_k,t_{k+1}]$ from $\mathcal{T}_n$ such that $t_k< u \leq t_{k+1}$, and let $t(u)$ be the left end point of $J^u$. Then we can write
\begin{align*}
&\sum_{\mathcal{T}_n} \E \big[ F_I.\mu_I(J_k \times B)  \big| \mathcal{G}_{t_k} \big]\\
 &= \sum_{M \in \mathcal{M}_t} \int_{(0,t] \times E_I} \mathbb{I}_{t(u)}^{M}
 \frac{ \E_{M,R_{I}=(u,e)} [\mathbb{I}_{t(u)}^{M} F.\mu_I(J^u \times B  )    ]  }{P_M(A_{t(u)}^{M}) }  P_M^{R_{I}}(\d (u,e)).
\end{align*}
Taking the limit for $n\rightarrow \infty$, for almost each $\omega \in \Omega$ we obtain
\begin{align}\label{ZwischenGl1}\begin{split}
&\lim_{n \rightarrow \infty} \sum_{\mathcal{T}_n} \E \big[ F_I.\mu_I(J_k \times B)  \big| \mathcal{G}_{t_k} \big]\\
 &= \sum_{M \in \mathcal{M}_t}  \int_{(0,t] \times E_I} \lim_{n \rightarrow \infty}  \mathbb{I}_{t(u)}^{M}
 \frac{ \E_{M,R_{I}=(u,e)} [\mathbb{I}_{t(u)}^{M} F_I.\mu_I(J^u \times B )    ]  }{P_M(A_{t(u)}^{M}) }  P_M^{R_{I}}(\d (u,e)),
\end{split}\end{align}
 using that $\mathcal{M}_t$ is finite for almost each $\omega$ and applying the Monotone Convergence Theorem and the Dominated Convergence Theorem. Note that  Proposition \ref{LemmaBoundedFractions}, assumption \eqref{IntegrCondJumpProcess} and
 $ 0 \leq \mathbb{I}_{t(u)}^{M} F_I.\mu_I(J^u \times B )  \leq  F_I.\mu_I((0,t] \times B )$ ensure the existence of an integrable majorant.
For $n \rightarrow \infty$ we have $t(u) \uparrow u$ and $J^u \downarrow \{u\}$, so the Dominated Convergence Theorem implies that
\begin{align*}
 &\lim_{n \rightarrow \infty} \E_{M,R_{I}=(u,e)} [ \mathbb{I}_{t(u)}^{M} F_I.\mu_I(J^u \times B )     ]\\
 &= \E_{M,R_{I}=(u,e)} [ \mathbb{I}_{u-}^{M} \mathbf{1}_B(e) F_I(u,e) \mu_I(\{u\} \times \{e\} ) ]\\
 &= \mathbf{1}_B(e) \E_{M,R_{I}=(u,e)} [ \mathbb{I}_{u-}^{M}  F_I(u,e)  ].
\end{align*}
All in all we get that the right hand side of
equation \eqref{ZwischenGl1} equals $G_I.\nu_I((0,t ]\times B)$, and we can conclude that the first equation in Proposition \ref{CompensatorPerConstruction} holds. The proof of the second equation in Proposition \ref{CompensatorPerConstruction} is similar.
\end{proof}
\begin{proposition}\label{CompensatorIsItsOwnCompensator} Under the assumptions of {\color{black}Proposition \ref{CompensatorPerConstruction}}, for each $t \geq 0$  and $B \in \mathcal{E}_I $  we almost surely have
\begin{align*}
\lim_{n \rightarrow \infty} \sum_{\mathcal{T}_n} \E \big[ G_I.\nu_I((t_k,t_{k+1}] \times B)  \big| \mathcal{G}_{t_k} \big]& = G_I.\nu_I((0,t ]\times B),\\
\lim_{n \rightarrow \infty} \sum_{\mathcal{T}_n} \E \big[ H_I.\rho_I((t_k,t_{k+1}] \times B)  \big| \mathcal{G}_{t_{k+1}} \big] &= H_I.\rho_I((0,t ]\times B)
\end{align*}
for any increasing sequence of partitions $(\mathcal{T}_n)_{n\in \mathbb{N}}$ of $[0,t]$ with $\lim_{n \rightarrow \infty} |\mathcal{T}_n|=0$.
\end{proposition}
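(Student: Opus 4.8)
The plan is to show that the infinitesimal forward compensator of $G_I.\nu_I$ equals $G_I.\nu_I$ itself (and symmetrically for the backward case), which is exactly the IF/IB-predictability property needed to complete the proof of Theorem~\ref{GeneralCompOfJumpProc}. The key observation is that $G_I.\nu_I((0,t]\times B)$ is itself a jump process driven by the counting measures $\mu_{I'}$, because the integrand $u \mapsto \mathbb{I}^M_{u-}$ and the factor $P_{M,R_I=(u,e)}(A^M_{u-})/P_M(A^M_{u-})$ only change at the jump times $T_j$ of the underlying point process. I would therefore aim to rewrite $G_I.\nu_I$ as an integral against the measures $\mu_{I'}$ and then apply Proposition~\ref{CompensatorPerConstruction} a second time.

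\emph{First} I would make precise the jump structure. The process $\mathbb{I}^M_{u-}$ is constant between consecutive random times and jumps only at points $u = T_j$; likewise the ratio involving $P_M(A^M_{u-})$ changes only at such points. Writing $L_M(u):=\E_M[\mathbb{I}^M_u]=P_M(A^M_u)$, which has finite variation bounded by $2$ (as used in the proof of Theorem~\ref{PropDefOptionalProjection}), the measure $\d\big(G_I.\nu_I\big)$ charges only the countably many random times $T_j$. Hence $G_I.\nu_I((0,t]\times B)$ can be expressed in the form $\sum_{I'\in\mathcal{N}}\int_{(0,t]\times E_{I'}} \widetilde{F}_{I'}(u,e)\,\mu_{I'}(\d(u,e))$ for a suitable jointly measurable integrand $\widetilde{F}_{I'}$, obtained by differencing the piecewise-constant factors across each jump. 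I would verify that the integrability condition \eqref{IntegrCondJumpProcess} is inherited by $\widetilde{F}_{I'}$, using Lemma~\ref{FiniteDIstrOfR}, Proposition~\ref{LemmaBoundedFractions} and the boundedness of $P_{M,R_I=(u,e)}(A^M_{u-})$ by $1$, so that Proposition~\ref{CompensatorPerConstruction} applies to this new jump process.

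\emph{Second}, having represented $G_I.\nu_I$ as a $\mu$-driven jump process, I would apply Proposition~\ref{CompensatorPerConstruction} to it. That proposition computes the limit $\lim_n \sum_{\mathcal{T}_n}\E[\,\cdot\,|\mathcal{G}_{t_k}]$ of such a process as $\widetilde{G}_I.\nu_I$, where $\widetilde{G}_I$ is built from $\widetilde{F}_{I'}$ via the same conditioning formula. The crucial point is that the integrand $\widetilde{F}_{I'}(u,e)$ is already $\mathcal{G}^-_u$-measurable by construction, since it depends on $\mathbb{I}^M_{u-}$ and on the factor $P_{M,R_I=(u,e)}(A^M_{u-})/P_M(A^M_{u-})$ evaluated at the left limit $u-$. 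By the remark immediately following Theorem~\ref{GeneralCompOfJumpProc} (which is a consequence of Proposition~\ref{DarstellungBedErwartBzglG}), for a $\mathcal{G}^-_u$-measurable integrand the forward compensator integrand $\widetilde{G}_I$ coincides with $\widetilde{F}_{I'}$ itself, so the forward compensator of $G_I.\nu_I$ is $G_I.\nu_I$. The second, backward identity follows by the symmetric argument, noting that $H_I.\rho_I$ is driven by $\mu$ with a $\mathcal{G}_u$-measurable integrand, so the remark's second half applies.

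\emph{The main obstacle} I anticipate is the first step: making the representation of $G_I.\nu_I$ as a $\mu$-driven jump process rigorous, in particular identifying the integrand $\widetilde{F}_{I'}$ correctly and checking its joint measurability and the inherited integrability. One must be careful that the jumps of $G_I.\nu_I$ come both from jumps of $\mathbb{I}^M$ (births and deaths of the indicator) and from the discontinuities of the normalising denominator $P_M(A^M_\cdot)$, and that these are matched to the right underlying measures $\mu_{I'}$; the bookkeeping across the family index $M\in\mathcal{M}$ and the boundedness estimates from Proposition~\ref{LemmaBoundedFractions} are what guarantee that the resulting integrand satisfies \eqref{IntegrCondJumpProcess}. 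Once the jump process representation is in hand, the $\mathcal{G}^-_u$-measurability of the integrand makes the application of Proposition~\ref{CompensatorPerConstruction} essentially immediate.
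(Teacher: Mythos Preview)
Your approach rests on a false premise: the process $t\mapsto G_I.\nu_I((0,t]\times B)$ is \emph{not} in general a pure jump process supported on the random times $T_j$, so it cannot be written as $\sum_{I'}\int \widetilde F_{I'}\,\d\mu_{I'}$ and Proposition~\ref{CompensatorPerConstruction} cannot be reapplied to it. Look again at the definition of $\nu_I$: the integrating measure is $P_M^{R_I}(\d(u,e))$, the conditional \emph{distribution} of $R_I$ given $Z_M$, which for continuous jump-time laws has a diffuse part in $u$. Likewise, the denominator $P_M(A^M_{u-})=\E_M[\mathbb I^M_{u-}]$ is a conditional probability, not an indicator; as a function of $u$ it is typically continuous rather than piecewise constant. (In the textbook case of a single exponential jump time, the compensator is $\lambda(t\wedge T)$, which grows continuously up to $T$.) The only genuinely piecewise-constant ingredient is the random factor $\mathbb I^M_{u-}$; everything else carries the distributional smoothness of the underlying model. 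Hence the ``jump-process representation'' step cannot be carried out, and the rest of your plan collapses with it.

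The paper's proof is conceptually different: it computes the limit directly. First one shows the integrability
\[
\E\big[G_I.\nu_I((0,t]\times E_I)\big]=\E\big[G_I.\mu_I((0,t]\times E_I)\big]<\infty
\]
by unwinding the definition of $\nu_I$ and using the $\mathcal G^-_u$-measurability of $G_I(u,e)$ to pull it inside the conditional expectation (cf.~\eqref{SpurSigmaAlgebren}). With this in hand, one writes $\E[G_I.\nu_I(J_k\times B)\mid\mathcal G_{t_k}]$ via Proposition~\ref{DarstellungBedErwartBzglG} as a sum over $M$ of ratios, applies Dominated Convergence (justified by Proposition~\ref{LemmaBoundedFractions} and the integrability just established) to pass the limit inside, and finally uses the $\mathcal G^-_u$-measurability of $G_I$ once more to identify the limit with $G_I.\nu_I((0,t]\times B)$. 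The backward identity is symmetric. The essential idea you correctly spotted---that the integrand's $\mathcal G^-_u$-measurability is what makes the compensator self-reproducing---is right; what you need is a direct dominated-convergence argument rather than a jump-process recursion.
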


\begin{proof}
By decomposing  $G$ into a positive part $G^+$ and a negative part $G^-$, it suffices to prove the first equation for the non-negative mappings $G^+$ and $G^-$ only.  Therefore, without loss of generality we suppose from now on that $G$ is non-negative.

From the definition of $\nu_I$ and the Monotone Convergence Theorem we get
\begin{align*}%\label{CompTheoEq1}\begin{split}
& \E[ G_I.\nu_I((0,t]\times E_I ) ] \\
& = \sum_{M \in \mathcal{M}} \E\bigg[  \E_{M}\bigg[  \int_{(0,t] \times E_I} G_I(u,e) \mathbb{I}^M_{u-}  \frac{ P_{M,R_{I}=(u,e)}(A_{u-}^{M})}{P_M(A_{u-}^{M}) }   P^{R_I}_M(\d (u,e) ) \bigg]  \bigg].
%\end{split}
\end{align*}
From Proposition \ref{DarstellungBedErwartBzglG} we know that $G_I(u,e)$ is $\mathcal{G}^-_{u}$-adapted for each $(u,e)$. This fact and  \eqref{SpurSigmaAlgebren} imply that
\begin{align}\label{Ghieinziehen}
  G_I(u,e) \mathbb{I}^M_{u-} P_{M,R_{I}=(u,e)}(A_{u-}^{M}) =  \mathbb{I}^M_{u-} \E_{M,R_{I}=(u,e)}[\mathbb{I}^M_{u-} G_I(u,e)] .
\end{align}
By applying the Fubini-Tonelli Theorem and  the Monotone Convergence Theorem,  we obtain
\begin{align*}
&\E[ G_I.\nu_I((0,t]\times E_I ) ]\\
 &=\sum_{M \in \mathcal{M}} \E\bigg[   \int_{(0,t] \times E_I}  \E_{M}[ \mathbb{I}^M_{u-} ]  \frac{ \E_{M,R_{I}=(u,e)}[\mathbb{I}_{\tau_I-}^{M} G_I(Q_I,Z_I)]}{P_{M}(A_{u-}^{M}) }   P_{M}^{R_I}(\d (u,e) ) \bigg]\\
 %&=\sum_{M \in \mathcal{M}}\sum_{j=1}^{\infty}\E\bigg[   \int_{(0,t] \times E}   \E_{M}[\mathbb{I}^M_{u-}| R_{j}=(u,e)]  P^{R_j}_{M}(\d (u,e) ) \bigg]\\
 &=\sum_{M \in \mathcal{M}}\E\Big[   \E_{M}\big[   \mathbb{I}^M_{\tau_I-} G_I(Q_I,Z_I) \mu_I((0,t]\times E_I)\big] \Big]\\
  &= \E\bigg[   G_I.\mu_I((0,t] \times E_I ) \bigg].
\end{align*}
The latter expectation is finite according to assumption \eqref{IntegrCondJumpProcess}. Hence, for each $M \in \mathcal{M}$ we almost surely have
\begin{align}\begin{split}\label{CompTheoEq2}
   &\E_M[ G_I.\nu_I((0,t]\times E_I )  ] < \infty,\\
  &G_I.\nu_I((0,t]\times E_I )  < \infty.
\end{split}\end{align}
Let $J_k:=(t_k,t_{k+1}]$. From the Dominated Convergence Theorem we obtain
\begin{align*}
   \lim_{n \rightarrow \infty} \sum_{\mathcal{T}_n}  \mathbb{I}^M_{t_k}  G_I.\nu_I(J_k \times B )  =  (\mathbb{I}^M_{\cdot -}G_I).\nu_I((0,t] \times B )
\end{align*}
since $\mathbb{I}^M$ is bounded by $1$ and because of the second line in \eqref{CompTheoEq2}. By using the first line in \eqref{CompTheoEq2}, the Dominated Convergence Theorem moreover yields
\begin{align*}
   &\lim_{n \rightarrow \infty} \sum_{\mathcal{T}_n}  \E_M[\mathbb{I}^M_{t_k} G_I.\nu_I(J_k \times B )]     =  \E_M\big[(\mathbb{I}^M_{\cdot -}G_I).\nu((0,t] \times B ) \big] .
\end{align*}
By applying the Fubini-Tonelli Theorem we can show that the latter equation equals
\begin{align*}
        \int_{(0,t] \times B}   \E_M[\mathbb{I}^M_{u-} G_I(u,e)]    \frac{P_{M, R_{I}=(u,e)}(A_{u-}^{M})}{P_{M}(A_{u-}^{M})}  P_M^{R_I}(\d (u,e) ).
\end{align*}
Using Proposition \ref{LemmaBoundedFractions} and the Dominated Convergence Theorem, we therefore obtain
\begin{align*}
& \lim_{n \rightarrow \infty} \sum_{\mathcal{T}_n} \mathbb{I}^M_{t_k} \frac{ \E_M [  \mathbb{I}^M_{t_k}  G_I.\nu_I(J_k \times B)  ]}{\E_M [ \mathbb{I}^M_{t_k} ]} \\
&=  \int_{(0,t] \times B} \frac{ \mathbb{I}^M_{u-} }{\E[\mathbb{I}^M_{u-}]}  \E_M[\mathbb{I}^M_{u-} G_I(u,e)]    \frac{P_{M, R_{I}=(u,e)}(A_{u-}^{M})}{P_{M}(A_{u-}^{M})}  P_M^{R_I}(\d (u,e) )\\
& = \int_{(0,t] \times B}  \mathbb{I}^M_{u-} G_I(u,e)     \frac{P_{M, R_{I}=(u,e)}(A_{u-}^{M})}{P_{M}(A_{u-}^{M})}  P_M^{R_I}(\d (u,e) ),
\end{align*}
where the second equality bases on  the fact that \eqref{SpurSigmaAlgebren} and the $\mathcal{G}^-_{u}$-adaptedness of $G_I(u,e,)$ allows us to pull $G_I(u,e)$ out of the conditional expectation $\E_M[\mathbb{I}^M_{u-} G_I(u,e)] $. Summing the latter equation over $M \in \mathcal{M}_t$ for $\mathcal{M}_t$ defined as in  \eqref{DefOfMt} and applying
Proposition \ref{DarstellungBedErwartBzglG}, we obtain
 \begin{align*}
\lim_{n \rightarrow \infty} \sum_{\mathcal{T}_n} \E \big[ G_I.\nu_I((t_k,t_{k+1}] \times B)  \big| \mathcal{G}_{t_k} \big] = G_I.\nu_I((0,t ]\times B)
\end{align*}
almost surely. Thus, we can conclude that the first equation in Proposition \ref{CompensatorIsItsOwnCompensator} holds. The proof of the second equation in Proposition \ref{CompensatorIsItsOwnCompensator} is similar.
\end{proof}
{\color{black}
\begin{proof}[Proof of Theorem \ref{GeneralCompOfJumpProc}]
Let $G_I$ and $H_I$ be defined as in Proposition \ref{CompensatorPerConstruction}.
If $F_I(t,e)$ is $\mathcal{G}_t^-$-measurable for each $(t,e)$, then Proposition \ref{DarstellungBedErwartBzglG} implies that $G_I(t,e)=F_I(t,e)$ almost surely. Similarly, if $F_I(t,e)$ is $\mathcal{G}_t$-measurable for each $(t,e)$, we have $H_I(t,e)=F_I(t,e)$ almost surely.
With this fact and  by subtracting the limit equations in Proposition \ref{CompensatorPerConstruction} and  Proposition \ref{CompensatorIsItsOwnCompensator}, we obtain that $G_I.\mu_I ([0,t] \times B)-G_I.\nu_I ([0,t] \times B)$ and $H_I.\mu_I ([0,t] \times B)-H_I.\rho_I ([0,t] \times B)$ satisfy the defining limit equations for IF/IB-martingales. The IF/IB-predictability of the compensators follows from  Proposition \ref{CompensatorIsItsOwnCompensator}.
Note that all involved processes are incrementally adapted to $\mathcal{G}$ because of
  \eqref{SpurSigmaAlgebren1} and \eqref{SpurSigmaAlgebren}.
\end{proof}
}

\section{Infinitesimal martingale representations}
Suppose that $\lambda_I$ is the compensator of $\mu_I$ with respect to $\mathcal{F}$.
For each  integrable random variable $\xi$, the classical martingale representation theorem yields that the martingale $X_t:=\E[ \xi | \mathcal{F}_t ]$ can be represented as
\begin{align}\label{ClassMartRepresOFCondExp}
  X_t =  X_0 + \sum_{I \in \mathcal{N}} \int_{(0,t]\times E_I} F_I(u,e) \Big(\mu_I(\d (u,e)) - \lambda_I(\d (u,e))\Big),
\end{align}
where $F(u,e)(\omega)$ is jointly measurable in $(u,e,\omega)$ and $\omega \mapsto F(u,e)(\omega)$ is $\mathcal{F}_{u-}$-measurable for each $(u,e)$, see e.g.~Karr (1986). We now extend this result to the non-monotone information $\mathcal{G}$.
\begin{theorem}\label{MartingaleLikeRepresentation}
Let  $\xi$ be an  integrable  random variable.
Then for each $t \geq 0$ equation \eqref{MartingRepres} %{TypeOfProblem} %
%\begin{align}\label{MartingRepres}\begin{split}
% \E[ \xi | \mathcal{G}_t]-\E[ \xi | \mathcal{G}_0]
% &= \int_{(0,t ]\times E \times \mathbb{N}} G(u-,u,e,j) \,(\mu-\nu) (\d (u,e,j)) \\
% &\quad + \int_{(0,t]\times E\times \mathbb{N}} G(u,u,e,j) \,(\rho- \mu) (\d (u,e,j))
%\end{split}\end{align}
holds almost surely  for
 \begin{align}\label{MartingRepres2}
G_I(s, u,e) &:= \sum_{M \in \mathcal{M}} \mathbb{I}^M_{s} \bigg( \frac{\E_{M,R_I=(u,e)}[ \mathbb{I}^M_{s} \xi ] }{\E_{M,R_I=(u,e)}[ \mathbb{I}^M_{s} ]} - \frac{\E_M[ \mathbb{I}^M_{u-}\mathbb{I}^M_{u} \xi ] }{\E_M[ \mathbb{I}^M_{u-}\mathbb{I}^M_{u}]} \bigg).
\end{align}
For each $I \in \mathcal{I}$ and $e \in E_I$ the process $u \mapsto G_I(u-,u,e)$ is $\mathcal{G}^-$-adapted and the process $u \mapsto G_I(u,u,e)$ is $\mathcal{G}$-adapted.
\end{theorem}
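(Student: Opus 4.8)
The plan is to prove the two assertions of Theorem \ref{MartingaleLikeRepresentation} in turn, handling the adaptedness claim first because it feeds into the representation. Throughout write $Y_t:=\E[\xi\mid\mathcal{G}_t]$. Applying Theorem \ref{PropDefOptionalProjection} to the constant-in-time process $X_s\equiv\xi$, which satisfies \eqref{IntegrabCond} since $\E[|\xi|]<\infty$, the process $Y$ is c\`{a}dl\`{a}g with $Y_{t-}=\E[\xi\mid\mathcal{G}^-_t]$, and Proposition \ref{DarstellungBedErwartBzglG} gives the explicit forms
\[ Y_t=\sum_{M\in\mathcal{M}}\mathbb{I}^M_t\frac{\E_M[\xi\,\mathbb{I}^M_t]}{\E_M[\mathbb{I}^M_t]},\qquad Y_{t-}=\sum_{M\in\mathcal{M}}\mathbb{I}^M_{t-}\frac{\E_M[\xi\,\mathbb{I}^M_{t-}]}{\E_M[\mathbb{I}^M_{t-}]}. \]
I would reduce to bounded $\xi$ by truncation and pass to the limit with monotone/dominated convergence at the end. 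For the adaptedness claim, the factor $\mathbb{I}^M_{u-}=\mathbf{1}_{A^M_{u-}}$ in \eqref{MartingRepres2} localises each summand to $A^M_{u-}$; on that set the conditioning variables $Z_{M_I}$ (with $M_I\subseteq M$) are exactly the marks of innovations active just before $u$, hence $\mathcal{G}^-_u$-measurable, and the trace identity \eqref{SpurSigmaAlgebren} shows that the corresponding fractions times $\mathbb{I}^M_{u-}$ are $\mathcal{G}^-_u$-measurable. Since $\mathcal{M}_u(\omega)$ is finite by \eqref{FinExpJumpNo}, the sum over $M$ preserves measurability, so $u\mapsto G_I(u-,u,e)$ is $\mathcal{G}^-$-adapted; the identity \eqref{SpurSigmaAlgebren1} yields the $\mathcal{G}$-adaptedness of $u\mapsto G_I(u,u,e)$ verbatim. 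This step is routine.

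The heart is \eqref{MartingRepres}. The reading of \eqref{MartingRepres2} that I would exploit is that $G_I(u-,u,e)$ and $G_I(u,u,e)$ are the \emph{forward} and \emph{backward} jump sizes of $Y$: the first fraction is a regular version of $\E[\xi\mid\mathcal{G}_s\vee\sigma(R_I)]$ evaluated at $R_I=(u,e)$ (Proposition \ref{DarstellungBedErwartBzglG} with the extra conditioning on $R_I$), while the second, $s$-independent fraction is the regime-wise ``no-jump continuation'' value. On the support of $\mu_I$, i.e.\ $(u,e)=(Q_I,Z_I)$, knowing the pre- (resp.\ post-) jump regime together with $R_I$ reconstructs $\mathcal{G}_u$, so both first fractions collapse to $Y_u$, whereas the continuation fractions collapse to $Y_{u-}$ for the pre-jump regime and to $Y_u$ for the post-jump regime. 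Hence $G_I(u-,u,e)-G_I(u,u,e)=Y_u-Y_{u-}$ on $\{\Delta\mu_I=1\}$, so that $\sum_I\int[G_I(u-,u,e)-G_I(u,u,e)]\,\mu_I$ reproduces exactly the jumps of $Y$.

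It then remains to match the predictable part: I would show that $-\sum_I\int G_I(u-,u,e)\,\nu_I+\sum_I\int G_I(u,u,e)\,\rho_I$ equals the continuous part of $Y-Y_0$. This is a filtering-equation type identity, and I would establish it by reusing the Riemann-sum limits of Propositions \ref{CompensatorPerConstruction} and \ref{CompensatorIsItsOwnCompensator}, now applied to the increments of $Y$ itself: writing $Y_t-Y_0=\lim_n\sum_{\mathcal{T}_n}(Y_{t_{k+1}}-Y_{t_k})$ and inserting the explicit fractions, the part of each increment caused by a jump inside $(t_k,t_{k+1}]$ converges to the $\mu_I$-terms, while the complementary continuation part converges, by the same dominated-convergence argument and the definitions of $\nu_I,\rho_I$, to the two compensator integrals. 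By the adaptedness just shown together with Theorem \ref{GeneralCompOfJumpProc}, $\int G_I(u-,u,e)\,\nu_I$ is the IF-compensator of $\int G_I(u-,u,e)\,\mu_I$ and $\int G_I(u,u,e)\,\rho_I$ the IB-compensator of $\int G_I(u,u,e)\,\mu_I$; this is precisely what makes the forward and backward infinitesimal sums of the two sides of \eqref{MartingRepres} coincide. Since both sides are c\`{a}dl\`{a}g of finite variation, vanish at $t=0$, and share the same jumps and the same predictable part, they are equal.

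The main obstacle is the drift-matching in full generality, when the jump-time distributions may carry atoms. Then $\nu_I$ and $\rho_I$ themselves have atoms and contribute to jumps, the clean collapses ``continuation $=Y_{u-}$ resp.\ $Y_u$'' acquire corrections, and the products $\mathbb{I}^M_{u-}\mathbb{I}^M_u$ in \eqref{MartingRepres2} are exactly the device that keeps jump- and drift-accounting consistent. For this reason the argument must run through the infinitesimal forward/backward sums and their limits, as in Propositions \ref{CompensatorPerConstruction} and \ref{CompensatorIsItsOwnCompensator}, rather than through a pathwise split into a continuous and a jump part, and one must justify interchanging the limit with the locally finite sum over $M$ and the sum over $I$ using Proposition \ref{LemmaBoundedFractions}, Lemma \ref{FiniteDIstrOfR} and the integrability \eqref{IntegrCondJumpProcess} inherited from $\E[|\xi|]<\infty$.
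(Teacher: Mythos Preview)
Your adaptedness argument is fine and matches the paper's one-line appeal to Proposition~\ref{DarstellungBedErwartBzglG}.

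For the representation itself, your route diverges substantially from the paper's, and the key technical step is not actually carried out. Your jump-matching claim---that on $\{\Delta\mu_I=1\}$ the continuation fractions collapse to $Y_{u-}$ and $Y_u$, giving $G_I(u-,u,e)-G_I(u,u,e)=\Delta Y_u$---is not correct in general. The second fraction in \eqref{MartingRepres2} is $\E_M[\mathbb{I}^M_{u-}\mathbb{I}^M_u\xi]/\E_M[\mathbb{I}^M_{u-}\mathbb{I}^M_u]$, i.e.\ the $Z_M$-conditional expectation of $\xi$ on $A^M_{u-}\cap A^M_u$, which differs from $Y_{u-}=\E_M[\mathbb{I}^M_{u-}\xi]/\E_M[\mathbb{I}^M_{u-}]$ whenever the jump-time law under $P_M$ has an atom at $u$. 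You acknowledge this in your last paragraph, but then the substitute argument is only ``run through the infinitesimal forward/backward sums as in Propositions~\ref{CompensatorPerConstruction}--\ref{CompensatorIsItsOwnCompensator}''. Those propositions treat integrals of the form $F_I.\mu_I$ and $G_I.\nu_I$; the process $Y$ is a \emph{quotient} $\sum_M\mathbb{I}^M_t\E_M[\mathbb{I}^M_t\xi]/\E_M[\mathbb{I}^M_t]$, and it is not explained how the Riemann-sum machinery would handle the denominator. This is the missing idea.

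The paper takes a different, more explicit path that sidesteps both the atomic-case failure and the quotient difficulty. It first proves a key auxiliary identity (Lemma~\ref{LemmaForMRT}): for every $M$,
\[
\E_M[\mathbb{I}^M_t\xi]-\E_M[\mathbb{I}^M_0\xi]=\sum_{I}\int_{(0,t]\times E_I}\E_{M,R_I=(u,e)}\big[(\mathbb{I}^M_u-\mathbb{I}^M_{u-})\xi\big]\,P_M^{R_I}(\d(u,e)).
\]
This is then split as $K_t+L_t$ according to whether $I$ lies in the ``exit'' index set $M'$ or the ``entry'' index set $M''$, and one checks $\E_M[\mathbb{I}^M_{t-}\mathbb{I}^M_t\xi]=K_t+L_{t-}$. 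The quotient is handled by \emph{pathwise Stieltjes integration by parts}: write $\mathbb{I}^M_t\E_M[\mathbb{I}^M_t\xi]$ once as itself and once as $(\mathbb{I}^M_t\E_M[\mathbb{I}^M_t\xi]/\E_M[\mathbb{I}^M_t])\cdot\E_M[\mathbb{I}^M_t]$, apply the product rule to both, equate, and divide by $k_t+l_{t-}=\E_M[\mathbb{I}^M_{t-}\mathbb{I}^M_t]$. This algebraic manipulation produces exactly the $\mu_I$-, $\nu_I$- and $\rho_I$-integrals with the correct coefficients, including the $\mathbb{I}^M_{u-}\mathbb{I}^M_u$ corrections, uniformly in the atomic and atomless cases. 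A final zero term \eqref{MRTtheoremNullEq}, coming from the bijection $M\leftrightarrow\tilde{M}$ between pre- and post-jump regimes (your ``knowing the pre-/post-jump regime together with $R_I$ reconstructs $\mathcal{G}_u$'' intuition, made precise), is added before summing over $M$ to obtain \eqref{MartingRepres}. Neither the lemma, the $K/L$ decomposition, nor the integration-by-parts step appears in your outline, and these are precisely what make the argument go through without a pathwise jump/continuous split.
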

%
%By applying Proposition \ref{DarstellungBedErwartBzglG}  one can show that
%\begin{align}\label{MartingRepres2}\begin{split}
% G(u-,u,e,j) &= \E[ \xi | \mathcal{G}^-_{u}, R_{j}=(u,e)] - \E[ \xi | \mathcal{G}_{s-}, \Delta \mu_u=0], \quad j \in \mathbb{N},\\
% G(u,u,e,j) &= \E[ \xi | \mathcal{G}_{u}, R_{j}=(u,e)] - \E[ \xi | \mathcal{G}_{u}, \Delta \mu_u=0],\\
% \end{split}\end{align}
% almost surely.
%
If the mappings $F_I(u,e)=G_I(u-,u,E)$ and $F_I(u,e)=G_I(u,u,e)$ satisfy the integrability condition in Theorem \ref{GeneralCompOfJumpProc}, then representation \eqref{MartingRepres} is a sum of IF-martingales and IB-martingales with respect to $\mathcal{G}$. In case of $\mathcal{F}= \mathcal{G}$ we have $\nu_I = \lambda_I, \rho_I =\mu_I$ and \eqref{MartingRepres} equals \eqref{ClassMartRepresOFCondExp}, so \eqref{MartingRepres} is a generalization of \eqref{ClassMartRepresOFCondExp}.

The proof of Theorem \ref{MartingaleLikeRepresentation} is given below. Recall that our notation uses the convention \eqref{ConventionCondProb}.
\begin{lemma}\label{LemmaForMRT}
Let   $\xi$ be an integrable  random variable. Then for each $t \geq 0$ we  have
\begin{align}\begin{split}\label{MartRepThepEq2}
&\E_M[ \mathbb{I}_{t}^{M} \xi ] -\E_M[ \mathbb{I}^M_{0} \xi  ]
  =   \sum_{I \in \mathcal{N}} \int_{(0,t] \times E_I}  \E_{M,R_{I}=(u,e)}\big[(\mathbb{I}^M_{u}-\mathbb{I}^M_{u-}) \xi\big]   P^{R_I}_M(\d (u,e) ).
\end{split}
\end{align}
\end{lemma}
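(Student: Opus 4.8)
The plan is to reduce the identity to the elementary observation that the $\{0,1\}$-valued c\`{a}dl\`{a}g path $u \mapsto \mathbb{I}^M_u$ equals its starting value plus the sum of its jumps, and then to disintegrate the resulting jump sum through the random measures $(\mu_I)_{I \in \mathcal{N}}$.

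First I would record a pathwise jump decomposition of $\mathbb{I}^M$. By assumption \eqref{FinExpJumpNo} the path $u \mapsto \mathbb{I}^M_u(\omega)$ has, for almost every $\omega$, only finitely many jumps on $(0,t]$ and is constant between them, so $\mathbb{I}^M_t - \mathbb{I}^M_0 = \sum_{u \in (0,t]} (\mathbb{I}^M_u - \mathbb{I}^M_{u-})$ holds pathwise. Every jump of $\mathbb{I}^M$ occurs at a time at which one or more of the $T_i$ attain a common value, and by the disjointness requirement $\{T_i \neq T_j : i \in I, j \notin I\}$ built into the definition of $\mu_I$, the set of indices attaining that common value is a single $I \in \mathcal{N}$. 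Hence the jumps of $\mathbb{I}^M$ are recorded, without overlap, by the atoms of the measures $(\mu_I)_{I \in \mathcal{N}}$, and I obtain the pathwise identity
\begin{align*}
\mathbb{I}^M_t - \mathbb{I}^M_0 = \sum_{I \in \mathcal{N}} \int_{(0,t] \times E_I} \big( \mathbb{I}^M_u - \mathbb{I}^M_{u-} \big)\, \mu_I(\d(u,e)),
\end{align*}
in which, for each fixed $\omega$, only finitely many summands are nonzero and each $\mu_I$ contributes at most its single atom at $R_I = (Q_I, Z_I)$.

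Multiplying by $\xi$ and taking $\E_M$, the left-hand side becomes $\E_M[\mathbb{I}^M_t \xi] - \E_M[\mathbb{I}^M_0 \xi]$, which is finite since $|\mathbb{I}^M_\cdot \xi| \leq |\xi|$ is $P_M$-integrable. For the right-hand side I would apply, for each $I$, the disintegration identity
\begin{align*}
\E_M\bigg[ \int_{(0,t]\times E_I} \psi(u,e)\, \mu_I(\d(u,e)) \bigg] = \int_{(0,t]\times E_I} \E_{M,R_I=(u,e)}[\psi(u,e)]\, P^{R_I}_M(\d(u,e))
\end{align*}
with the integrand $\psi(u,e) = (\mathbb{I}^M_u - \mathbb{I}^M_{u-})\xi$. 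This is precisely the statement that $P^{R_I}_M$ is the $P_M$-law of $R_I$ --- consistent with the identity $\E_M[\mu_I(A)] = \int_A P^{R_I}_M$ established in Lemma \ref{FiniteDIstrOfR} --- combined with the defining tower property of the regular conditional expectation $\E_{M,R_I=(u,e)}$ under convention \eqref{ConventionCondProb}: since $\mu_I$ is the Dirac mass at $R_I$, the $\mu_I$-integral is just the evaluation $\psi(R_I)\,\mathbf{1}_{\{Q_I \in (0,t]\}}$, and the identity is nothing but the disintegration of $\E_M[\psi(R_I)\,\mathbf{1}_{\{Q_I \in (0,t]\}}]$ along $R_I$. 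Summing over $I \in \mathcal{N}$ then produces the asserted formula.

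The step I expect to cost the most work is justifying the interchange of $\E_M$ with the sum over the countable family $\mathcal{N}$ and with the $\mu_I$-integrals, because only $\xi$ --- and not $\xi$ times the number of jumps of $\mathbb{I}^M$ --- is assumed integrable, so a naive splitting into up- and down-jumps risks an $\infty - \infty$ situation. I would circumvent this by first proving the identity for bounded $\xi$, where $\E_M[\,|\xi|\sum_i \mathbf{1}_{\{T_i \leq t\}}\,] < \infty$ almost surely (assumption \eqref{FinExpJumpNo} conditioned on $Z_M$), so that every term is absolutely convergent, the sign decomposition of $\psi$ together with Tonelli's theorem legitimises the termwise disintegration, and the pathwise finiteness of the jump set recovers the integrable left-hand side. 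The passage to general integrable $\xi$ is the genuinely delicate point, since the right-hand side is continuous in $\xi$ only in the $\E_M[\,|\xi|\sum_i\mathbf{1}_{\{T_i\le t\}}\,]$-weighted norm rather than in $L^1(P_M)$; I would carry it out by a dominated-convergence argument along the truncations $\xi\,\mathbf{1}_{\{|\xi| \leq n\}}$, using that the left-hand side is $L^1(P_M)$-continuous to pin down the limit, and noting that this is exactly where the finite-intensity assumption \eqref{FinExpJumpNo} does the essential work.
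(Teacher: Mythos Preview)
Your approach is correct and takes a genuinely different route from the paper. The paper argues via partitions: it telescopes $\E_M[\mathbb{I}_t^M \xi] - \E_M[\mathbb{I}_0^M \xi]$ over a partition $\mathcal{T}_n$ of $[0,t]$, inserts the decomposition $1 = \sum_{I \in \mathcal{N}} \mathbf{1}_{\{U_{t_k} = Q_I\}}$ (where $U_{t_k}$ is the first event time strictly after $t_k$) to attribute each increment to the responsible $I$, disintegrates by conditioning on $R_I$, and then passes to the limit $|\mathcal{T}_n| \to 0$ via Dominated Convergence. You instead write down the pathwise jump decomposition of $\mathbb{I}^M$ through the $\mu_I$'s directly and disintegrate term by term, which is shorter and bypasses the partition machinery altogether. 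Both arguments reduce first to bounded $\xi$ --- the paper truncates to $0 \le \xi \le C$ and then invokes Monotone Convergence along $\xi_n = (\xi \wedge n) - ((-\xi) \wedge n)$ for the extension, essentially the same device you propose --- so your caution about the $\infty - \infty$ risk when passing to general integrable $\xi$ is well placed and applies with equal force to the paper's version. The partition route is in keeping with the paper's overall ``infinitesimal'' philosophy and reuses the same limiting template as the compensator proofs; your route is more economical and makes the identity transparently a disintegration of the elementary pathwise equation $\mathbb{I}_t^M - \mathbb{I}_0^M = \sum_{I} (\mathbb{I}_{Q_I}^M - \mathbb{I}_{Q_I-}^M)\,\mathbf{1}_{\{Q_I \in (0,t]\}}$.
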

\begin{proof}
As  \eqref{MartRepThepEq2} is additive in $\xi$, it suffices to show the equation for nonnegative and bounded random variables $\xi$ only. The general case follows then from the Monotone Convergence Theorem applied on the sequence $\xi_n := (\xi \wedge n) - (-\xi \wedge n)$, $n \in \mathbb{N}$. Therefore, in the remaining proof we suppose  that $0 \leq \xi \leq C $ for a finite real number $C$.

Let $U_{t_k}(\omega):= \sup\{ t \in (t_k ,\infty) : T_j(\omega) \not\in (t_k,t), j \in \mathbb{N}\}$, i.e.~$U_{t_k}$ gives the time of the first occurrence of  a random time strictly after $t_k$. Since %(\mathbb{I}^M_{t_{k+1}}-\mathbb{I}^M_{t_k})=(\mathbb{I}^M_{t_{k+1}}-\mathbb{I}^M_{t_k})
$1=\sum_{I \in \mathcal{N}} \mathbf{1}_{\{U_{t_k}=Q_I\}} $ we can conclude that
\begin{align}\begin{split}\label{MartRepThepEq1}
  & \E_M[ \mathbb{I}_{t}^{M} \xi ] -\E_M[ \mathbb{I}^M_{0} \xi  ] \\
    &=  \lim_{n \rightarrow \infty } \sum_{\mathcal{T}_n} \sum_{I \in \mathcal{N}} \E_M\big[ \mathbf{1}_{\{U_{t_k}=Q_I\}}(\mathbb{I}^M_{t_{k+1}}-\mathbb{I}^M_{t_k}) \xi \big]  \\
  & = \lim_{n \rightarrow \infty }  \sum_{I \in \mathcal{N}}  \sum_{\mathcal{T}_n} \int_{(t_k,t_{k+1}] \times E_I}  \E_{M, R_{I}=(u,e)}\big[\mathbf{1}_{\{U_{t_k}=Q_I\}}(\mathbb{I}^M_{t_{k+1}}-\mathbb{I}^M_{t_k}) \xi\big]   P^{R_I}_M(\d (u,e) ),
\end{split}\end{align}
where we use the fact that  $ \mathbf{1}_{\{U_{t_k}=Q_I\}}(\mathbb{I}^M_{t_{k+1}}-\mathbb{I}^M_{t_k}) = 0$  unless $t_k< Q_I \leq t_{k+1}$.
 Because of \eqref{Finite1A} and
 $| \E_{M, R_{I}=(u,e)}\big[\mathbf{1}_{\{U_{t_k}=Q_I\}}(\mathbb{I}^M_{t_{k+1}}-\mathbb{I}^M_{t_k}) \xi\big] | \leq 2 C$, we can apply the Dominated Convergence Theorem on the last line in \eqref{MartRepThepEq1}, which leads to  \eqref{MartRepThepEq2}.
Note here that
$$\mathbf{1}_{\{U_{t_k}=Q_I=u\}}(\mathbb{I}^M_{t_{k+1}}-\mathbb{I}^M_{t_k}) \rightarrow \mathbf{1}_{\{Q_I=u\}} (\mathbb{I}_{u}^{M}-\mathbb{I}^M_{u-}) $$
 for $t_{k+1} \downarrow u$ and $t_k \uparrow u$ implies that
\begin{align*}
&\E_{M,R_{I}=(u,e)}\big[\mathbf{1}_{\{U_{t_k}=Q_I\}}(\mathbb{I}^M_{t_{k+1}}-\mathbb{I}^M_{t_k}) \xi \big] \rightarrow \E_{M,R_{I}=(u,e)} \big[(\mathbb{I}_{u}^{M}- \mathbb{I}^M_{u-}) \xi\big].
\end{align*}
\end{proof}
\begin{proof}[Proof of Theorem \ref{MartingaleLikeRepresentation}] Let $M \in \mathcal{M}$ be arbitrary but fixed, and define $M+1:= \{i+1 : i \in M\}$. If $\mathbb{I}^M_{u-}=1$,  then only random times from the index set
\begin{align*}
  M':=(\{1,3,\ldots\} \setminus M) \cup (M+1)
\end{align*}
can occur at time $u$. If $\mathbb{I}^M_{u}=1$,  then only random times from the index set
\begin{align*}
  M'':=M \cup (\{2,4,\ldots \} \setminus (M+1))
\end{align*}
can be equal  to $u$. Therefore, equation \eqref{MartRepThepEq2} can be represented as
\begin{align*}
\E_M[ \mathbb{I}_{t}^{M} \xi ] = K_t + L_t, \quad t\geq 0
\end{align*}
where
\begin{align*}
K_t :=& \sum_{ I \subseteq M'} \int_{(0,t] \times E_I}  \E_{M,R_{I}=(u,e)}\big[(\mathbb{I}^M_{u}-\mathbb{I}^M_{u-}) \xi\big]   P^{R_I}_M(\d (u,e) ) +\E_M[ \mathbb{I}_{0}^{M} \xi ]  \\
=&- \sum_{I \subseteq M'} \int_{(0,t] \times E_I}  \E_{M,R_{I}=(u,e)}\big[\mathbb{I}^M_{u-} \xi\big]   P^{R_I}_M(\d (u,e) ) +\E_M[ \mathbb{I}_{0}^{M} \xi ] , \\
L_t :=& \sum_{I \subseteq M''} \int_{(0,t] \times E_I}  \E_{M,R_{I}=(u,e)}\big[(\mathbb{I}^M_{u}-\mathbb{I}^M_{u-}) \xi\big]   P^{R_I}_M(\d (u,e) )\\
=& \sum_{I \subseteq M''} \int_{(0,t] \times E_I}  \E_{M,R_{I}=(u,e)}\big[\mathbb{I}^M_{u} \xi\big]   P^{R_I}_M(\d (u,e) ).
\end{align*}
Furthermore, using $M'\cap M'' = \emptyset$, we can show that
\begin{align*}
   \E_M[ \mathbb{I}_{t}^{M}  \xi ] -\E_M[ \mathbb{I}_{t-}^{M}  \mathbb{I}_{t}^{M} \xi ]
  &=   \sum_{  I \subseteq M''} \E_M[ \mathbb{I}_{t}^{M}  \mathbf{1}_{\{Q_I=t\}} \xi ]  \\
  & =   \sum_{  I \subseteq M''} \int_{\{t\} \times E_I}    \E_{M,R_{I}=(u,e)}\big[\mathbb{I}^M_{u} \xi\big]   P^{R_I}_M(\d (u,e))\\
  & =  \Delta L_t
\end{align*}
for $t>0$, which implies that $\E_M[ \mathbb{I}_{t-}^{M}  \mathbb{I}_{t}^{M} \xi ]= K_{t}+L_{t-}$. Analogously we obtain
\begin{align*}
   \E_M[ \mathbb{I}_{t-}^{M}  \xi ] -\E_M[ \mathbb{I}_{t-}^{M}  \mathbb{I}_{t}^{M} \xi ]
  = - \Delta K_t.
\end{align*}
In the specific case $\xi=1$ we write $k_t$ and $l_t$ instead of $K_t$ and $L_t$.
By applying integration by parts path-wise for each $\omega \in \Omega$,  we get that
\begin{align*}
   & ( K_{t} +L_{t-}) \,\d \mathbb{I}_{t}^{M}  +  \mathbb{I}_{t-}^{M} \d K_t  + \mathbb{I}_{t}^{M} \d L_t\\
   &= \d \Big( \mathbb{I}_{t}^{M}  \E_M[ \mathbb{I}_{t}^{M}  \xi ]\Big)\\
      &= \d \bigg( \frac{\mathbb{I}_{t}^{M}  \E_M[ \mathbb{I}_{t}^{M}  \xi ]}{\E_M[ \mathbb{I}_{t}^{M}  ]}\E_M[ \mathbb{I}_{t}^{M}   ]\bigg)\\
& = (k_{t} + l_{t-})\, \d  \bigg( \frac{\mathbb{I}_{t}^{M} \E_M[ \mathbb{I}_{t}^{M}  \xi ]}{\E_M[ \mathbb{I}_{t}^{M}   ]}\bigg) +   \frac{\mathbb{I}_{t-}^{M} \E_M[ \mathbb{I}_{t-}^{M}  \xi ] }{\E_M[ \mathbb{I}_{t-}^{M}  ]} \d k_t+  \frac{\mathbb{I}_{t}^{M} \E_M[ \mathbb{I}_{t}^{M}  \xi ] }{\E_M[ \mathbb{I}_{t}^{M}   ]} \d l_t\\
& = (k_{t} + l_{t-})\, \d  \bigg( \frac{\mathbb{I}_{t}^{M} \E_M[ \mathbb{I}_{t}^{M}  \xi ]}{\E_M[ \mathbb{I}_{t}^{M}   ]}\bigg) +   \mathbb{I}_{t-}^{M} \frac{ K_{t-}+L_{t-}}{k_{t-}+l_{t-}} \d k_t+  \mathbb{I}_{t}^{M} \frac{ K_{t}+L_{t}}{k_{t}+l_{t}} \d l_t.
\end{align*}
The equation that is formed by the first and last line can be rewritten to
\begin{align*}
   & ( K_{t} +L_{t-}) \,\d \mathbb{I}_{t}^{M}  + \frac{k_{t} +l_{t-}}{k_{t-} +l_{t-}} \mathbb{I}_{t-}^{M} \d K_t  + \frac{k_{t} +l_{t-}}{k_{t} +l_{t}} \mathbb{I}_{t}^{M} \d L_t\\
& = (k_{t} + l_{t-})\, \d  \bigg( \frac{\mathbb{I}_{t}^{M} \E_M[ \mathbb{I}_{t}^{M}  \xi ]}{\E_M[ \mathbb{I}_{t}^{M}   ]}\bigg)   +   \mathbb{I}_{t-}^{M} \frac{ K_{t}+L_{t-}}{k_{t-}+l_{t-}} \d k_t+  \mathbb{I}_{t}^{M} \frac{ K_{t}+L_{t-}}{k_{t}+l_{t}} \d l_t,
\end{align*}
since $\Delta K_t$  and $\Delta L_t$ are dominated by $\Delta k_t$ and $\Delta l_t$ and because of
\begin{align*}
  &\bigg(\frac{k_{t} +l_{t-}}{k_{t-} +l_{t-}} -1 \bigg) \mathbb{I}_{t-}^{M} \Delta K_t +  \bigg(\frac{k_{t} +l_{t-}}{k_{t} +l_{t}} -1 \bigg) \mathbb{I}_{t}^{M} \Delta L_t\\
  &=\mathbb{I}_{t-}^{M} \bigg( \frac{ K_{t}+L_{t-}}{k_{t-}+l_{t-}}-\frac{ K_{t-}+L_{t-}}{k_{t-}+l_{t-}}\bigg) \Delta k_t+  \mathbb{I}_{t}^{M} \bigg( \frac{ K_{t}+L_{t-}}{k_{t}+l_{t}}-\frac{ K_{t}+L_{t-}}{k_{t}+l_{t}}\bigg) \Delta k_t.
\end{align*}
Under the convention $0/0:=0$ and by using the Radon-Nikodym Theorem we may
multiply $(k_t+l_{t-})^{-1}$ on both hand sides, which leads to
\begin{align}\label{PartIntEquat3}\begin{split}
   &  \frac{\E_M[ \mathbb{I}_{t-}^{M} \mathbb{I}_{t}^{M}  \xi ] }{\E_M[ \mathbb{I}_{t-}^{M}  \mathbb{I}_{t}^{M}   ]}   \,\d \mathbb{I}^M_t +   \frac{\mathbb{I}_{t-}^{M}  }{\E_M[ \mathbb{I}_{t-}^{M}   ]}\d K_t +  \frac{\mathbb{I}_{t}^{M}  }{\E_M[ \mathbb{I}_{t}^{M}   ]}\d L_t \\
& =  \d  \bigg( \frac{\mathbb{I}_{t}^{M} \E_M[ \mathbb{I}_{t}^{M}  \xi  ]}{\E_M[ \mathbb{I}_{t}^{M}    ]}\bigg)
 +   \frac{\E_M[ \mathbb{I}_{t-}^{M} \mathbb{I}_{t}^{M}  \xi ] }{\E_M[ \mathbb{I}_{t-}^{M}  \mathbb{I}_{t}^{M}   ]} \bigg( \frac{\mathbb{I}_{t-}^{M}  }{\E_M[ \mathbb{I}_{t-}^{M}   ]}\d k_t +  \frac{\mathbb{I}_{t}^{M}  }{\E_M[ \mathbb{I}_{t}^{M}  ]}\d l_t \bigg).
\end{split}\end{align}
%Note here that the boundedness of $\xi$ and Proposition \ref{LemmaBoundedFractions} guarantee that the c\`{a}dl\`{a}g processes $t \mapsto \E_M[ \mathbb{I}_{t}^{M}  \xi ]/\E_M[ \mathbb{I}_{t}^{M}    ]$ and  $t \mapsto  \mathbb{I}_{t}^{M}  /\E_M[ \mathbb{I}_{t}^{M}   ]$ are well-defined under the convention $0/0:=0$.
Because of \eqref{MartRepThepEq2} and $\d \mathbb{I}_{t}^{M}  =  \sum_{I \in \mathcal{N}} (\mathbb{I}_{t}^{M}  -\mathbb{I}_{t-}^{M} ) \mu_I (\d t\times  E_I )$, the latter equation can be rewritten to
\begin{align}\begin{split}   \label{MRTmainequation}
  &   \sum_{I \in \mathcal{N}} \frac{ \E_M[ \mathbb{I}_{t-}^{M} \mathbb{I}_{t}^{M}  \xi  ] }{
     \E_M[ \mathbb{I}_{t-}^{M} \mathbb{I}_{t}^{M}   ]}    (\mathbb{I}_{t}^{M}  -\mathbb{I}_{t-}^{M} ) \mu_I(\d t \times E_I )  \\
     &  -  \sum_{I \in \mathcal{N}} \int_{E_I}  \mathbb{I}_{t-}^{M}  \frac{\E_{M,R_I=(t,e)}[ \mathbb{I}_{t-}^{M}   \xi  ] }{\E_{M,R_I=(t,e)}[ \mathbb{I}_{t-}^{M}    ]} \nu_I (\d t \times \d e ) \\
     & \quad +  \sum_{I \in \mathcal{N}} \int_{E_I}  \mathbb{I}_{t}^{M}  \frac{\E_{M,R_I=(t,e)}[ \mathbb{I}_{t}^{M}   \xi  ] }{\E_{M,R_I=(t,e)}[ \mathbb{I}_{t}^{M}   ]} \rho_I (\d t \times \d e ) \\
       &= \d  \bigg( \frac{\mathbb{I}_{t}^{M} \E_M[ \mathbb{I}_{t}^{M}  \xi ]}{\E_M[ \mathbb{I}_{t}^{M}   ]}\bigg)\\
&\quad +   \sum_{I \in \mathcal{N}}\frac{ \E_M[ \mathbb{I}_{t-}^{M} \mathbb{I}_{t}^{M}  \xi ] }{   \E_M[ \mathbb{I}_{t-}^{M} \mathbb{I}_{t}^{M}    ]}  \Big(- \mathbb{I}_{t-}^{M}  \nu_I( \d t \times E_I ) + \mathbb{I}_{t}^{M}  \rho_I( \d t \times E_I )    \Big).
\end{split}\end{align}
Let $I \in \mathcal{N}$ be arbitrary but fixed. Then for each  $M \in \mathcal{M}$  there exists an $\tilde{M} \in \mathcal{M}$, and vice versa,  such that
\begin{align*}
   \mathbb{I}_{t-}^{M} \mu_I( \d t \times \d e )=  \mathbb{I}_{t}^{\tilde{M}} \mu_I( \d t \times \d e ).
\end{align*}
As a consequence, for almost each $\omega \in \Omega $  we  have
\begin{align}\begin{split}\label{MRTtheoremNullEq}
0 = \sum_{M \in \mathcal{M}} \sum_{I \in \mathcal{N}}  \int_{E_I} \bigg( &\mathbb{I}_{t}^{M}  \frac{\E_{M,R_I=(t,e)}[ \mathbb{I}_{t}^{M}   \xi  ] }{\E_{M,R_I=(t,e)}[ \mathbb{I}_{t}^{M}    ]} -  \mathbb{I}_{t-}^{M}  \frac{\E_{M,R_I=(t,e)}[ \mathbb{I}_{t-}^{M}   \xi  ] }{\E_{M,R_j=(t,e)}[ \mathbb{I}_{t-}^{M}   ]} \bigg) \,  \mu_I( \d t \times \d e ).
\end{split}\end{align}
Because of
\begin{align*}
\d \E[ \xi | \mathcal{G}_t] & = \sum_{M \in \mathcal{M}} \d  \bigg( \frac{\mathbb{I}_{t}^{M} \E_M[ \mathbb{I}_{t}^{M}  \xi  ]}{\E_M[ \mathbb{I}_{t}^{M}  ]}\bigg),
\end{align*}
by summing  equation \eqref{MRTmainequation} over $M \in \mathcal{M}$ and adding  \eqref{MRTtheoremNullEq}, for almost each $\omega \in \Omega$ we end up with \eqref{MartingRepres} and \eqref{MartingRepres2} after rearranging the addends.
By applying Proposition \ref{DarstellungBedErwartBzglG} we can see that $G_I(u-,u,e)$ is $\mathcal{G}_{u-}$-measurable and that $G_I(u,u,e)$ is $\mathcal{G}_u$-measurable for each $(I,e)$.
\end{proof}

\section{Infinitesimal representations for optional projections}
Suppose that $X$ is a c\`{a}dl\`{a}g process that satisfies \eqref{IntegrabCond} and such that $X_t-X_0$ is  $\mathcal{F}_t$-measurable for each $t \geq 0$. Then the optional projection of $X$ with respect to $\mathcal{F}$ can be represented as
\begin{align}\label{ExtendedClassMartRepr} \begin{split}
  &\d \E[ X_t | \mathcal{F}_t] =     \d X_t +  \sum_{I \in \mathcal{N}}\int_{E_I } F_I(t,e)  \Big( \mu_I(\d t \times \d e)) -  \lambda_I(\d t \times \d e  ) \Big)
\end{split}\end{align}
for  mappings $F_I(t,e)$ that are $\mathcal{F}$-predictable processes in argument $t$ for each $(I,e)$. In order to see that, apply the classical martingale representation theorem on the $\mathcal{F}$-martingale
\begin{align*}
  \E[ X_0 | \mathcal{F}_t]-\E[ X_0 | \mathcal{F}_0]=\E[ X_t | \mathcal{F}_t]-\E[ X_0 | \mathcal{F}_0] - ( X_t -X_0)
\end{align*}
and rearrange the addends.
The following theorem extends \eqref{ExtendedClassMartRepr} to non-monotone information settings.
\begin{theorem}\label{MartingaleLikeRepresentation2}
Let $X$ be a c\`{a}dl\`{a}g process that satisfies \eqref{IntegrabCond} and that has an IB-compensator with respect to $\mathcal{G}$, denoted as $X^{IB}$. Then
\begin{align}\label{MartingRepres3}\begin{split}
 \E[ X_t | \mathcal{G}_t]-\E[ X_0 | \mathcal{G}_0]
 &=  X^{IB}_t + \sum_{I \in \mathcal{N}} \int_{(0,t ]\times E_I } G_I(u-,u,e) \,(\mu_I-\nu_I) (\d (u,e)) \\
 &\quad \qquad + \sum_{I \in \mathcal{N}} \int_{(0,t]\times E_I} G_I(u,u,e) \,(\rho_I- \mu_I) (\d (u,e))
\end{split}\end{align}
almost surely with
\begin{align}\label{MartingRepres4}\begin{split}
G_I(s, u,e) &:= \sum_{M \in \mathcal{M}} \mathbb{I}^M_{s} \bigg( \frac{\E_{M,R_I=(u,e)}[ \mathbb{I}^M_{s} X_{u-} ] }{\E_{M,R_I=(u,e)}[ \mathbb{I}^M_{s} ]} - \frac{\E_M[ \mathbb{I}^M_{u-}\mathbb{I}^M_{u} X_{u-} ] }{\E_M[ \mathbb{I}^M_{u-}\mathbb{I}^M_{u} ]} \bigg).
\end{split}\end{align}
If $X$ has an IF-compensator with respect to $\mathcal{G}$, denoted as $X^{IF}$, then
\eqref{MartingRepres3} still holds but with  $X_t^{IB}$ replaced by $X_t^{IF}$ and $X_{u-}$ replaced by $X_u$ in \eqref{MartingRepres4}.
\end{theorem}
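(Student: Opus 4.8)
The plan is to reduce the claim to the representation for fixed random variables established in Theorem \ref{MartingaleLikeRepresentation}, by freezing the process on each cell of a shrinking partition, and to extract the drift $X^{IB}$ (resp.\ $X^{IF}$) directly from the defining property of the infinitesimal compensator. First I would fix $t \ge 0$, pick any increasing sequence of partitions $(\mathcal{T}_n)$ of $[0,t]$ with $|\mathcal{T}_n| \to 0$, and write the left-hand side as the telescoping limit
\begin{align*}
  \E[X_t | \mathcal{G}_t] - \E[X_0 | \mathcal{G}_0] = \lim_{n\to\infty} \sum_{\mathcal{T}_n} \big( \E[X_{t_{k+1}} | \mathcal{G}_{t_{k+1}}] - \E[X_{t_k} | \mathcal{G}_{t_k}] \big).
\end{align*}
In the IB-case I would split each summand by inserting the sigma-algebra $\mathcal{G}_{t_{k+1}}$,
\begin{align*}
  \underbrace{\E[X_{t_{k+1}} - X_{t_k} | \mathcal{G}_{t_{k+1}}]}_{(A)} + \underbrace{\E[X_{t_k} | \mathcal{G}_{t_{k+1}}] - \E[X_{t_k} | \mathcal{G}_{t_k}]}_{(B)},
\end{align*}
so that $(A)$ advances the process while keeping the information frozen at $t_{k+1}$, and $(B)$ advances the information while keeping the process frozen at $t_k$. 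The IF-case is the mirror image obtained by inserting $\mathcal{G}_{t_k}$ instead.

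The sum of the parts $(A)$ is handled purely by the compensator definition: adding the IB-predictability of $X^{IB}$ to the IB-martingale property of $X - X^{IB}$ (Definitions \ref{DefIFIBpred}, \ref{DefInfMarting} and \ref{DefInfCompens}) gives $\lim_n \sum_{\mathcal{T}_n} \E[X_{t_{k+1}} - X_{t_k} | \mathcal{G}_{t_{k+1}}] = X^{IB}_t - X^{IB}_0 = X^{IB}_t$ under the normalisation $X^{IB}_0=0$, which is exactly the drift in \eqref{MartingRepres3}; the IF-case yields $X^{IF}_t$ in the same way with $\mathcal{G}_{t_k}$ in place of $\mathcal{G}_{t_{k+1}}$.

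For the parts $(B)$ I would apply Theorem \ref{MartingaleLikeRepresentation} to the fixed integrable random variable $\xi = X_{t_k}$. Because the integrands there do not depend on the initial time, subtracting the representation at $t_k$ from the one at $t_{k+1}$ turns $(B)$ into
\begin{align*}
  &\E[X_{t_k} | \mathcal{G}_{t_{k+1}}] - \E[X_{t_k} | \mathcal{G}_{t_k}] \\
  &= \sum_{I} \int_{(t_k,t_{k+1}]\times E_I} G_I^{(k)}(u-,u,e)\,(\mu_I-\nu_I)(\d(u,e)) + \sum_{I} \int_{(t_k,t_{k+1}]\times E_I} G_I^{(k)}(u,u,e)\,(\rho_I-\mu_I)(\d(u,e)),
\end{align*}
where $G_I^{(k)}$ is the integrand \eqref{MartingRepres2} computed from $\xi = X_{t_k}$. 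Summing over $k$ concatenates the domains of integration into $(0,t]$, and as the mesh shrinks the left endpoint $t_k$ of the cell containing a given jump time $u$ increases to $u$; the c\`{a}dl\`{a}g property then forces $X_{t_k} \to X_{u-}$, so $G_I^{(k)} \to G_I$ with $G_I$ as in \eqref{MartingRepres4}. In the IF-case one freezes at $\xi = X_{t_{k+1}}$ and, since $t_{k+1} \downarrow u$, the frozen value converges to $X_u$, which is precisely the stated replacement of $X_{u-}$ by $X_u$.

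The hard part will be the rigorous interchange of the two limits in the last step --- the partition refinement on one hand and the passage from the frozen value $X_{t_k}$ to the left limit $X_{u-}$ on the other. I would control this by dominated convergence: Proposition \ref{LemmaBoundedFractions} bounds the fractions $\mathbb{I}^M_\cdot / \E_M[\mathbb{I}^M_\cdot]$ appearing in the integrands uniformly in $t$, and the integrability assumption \eqref{IntegrabCond} supplies the single integrable majorant $\sup_{0\le s\le t}|X_s|$ that dominates all the frozen values $X_{t_k}$ simultaneously, while the c\`{a}dl\`{a}g paths give the required pointwise convergence. The finiteness of the index set $\mathcal{M}_t(\omega)$ guaranteed by \eqref{FinExpJumpNo} lets me pull the limit through the $M$-summation, exactly as in the proofs of Propositions \ref{CompensatorPerConstruction} and \ref{CompensatorIsItsOwnCompensator}. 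Finally, the adaptedness claims --- that $u\mapsto G_I(u-,u,e)$ is $\mathcal{G}^-$-adapted and $u\mapsto G_I(u,u,e)$ is $\mathcal{G}$-adapted --- follow verbatim from Proposition \ref{DarstellungBedErwartBzglG}, as in Theorem \ref{MartingaleLikeRepresentation}.
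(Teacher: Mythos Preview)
Your proposal is correct and follows essentially the same route as the paper: the same telescoping decomposition, the same split into an increment-of-$X$ part yielding $X^{IB}$ (resp.\ $X^{IF}$) via the defining properties of the infinitesimal compensator, and the same application of Theorem~\ref{MartingaleLikeRepresentation} with $\xi=X_{t_k}$ (resp.\ $\xi=X_{t_{k+1}}$) on each cell, followed by a pathwise dominated-convergence passage to the limit using the c\`{a}dl\`{a}g property. Your discussion of the limit interchange is in fact more explicit than the paper's, which simply invokes dominated convergence pathwise without naming the majorant.
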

By applying Proposition \ref{DarstellungBedErwartBzglG} we can see that $G_I(u-,u,e)$ is $\mathcal{G}^-_{u}$-measurable and that $G_I(u,u,e)$ is $\mathcal{G}_u$-measurable. Hence,  the integrals in the first and  second line of \eqref{MartingRepres3} describe  IF-martingales and  IB-martingales with respect to $\mathcal{G}$ if $F_I(u,e)=G_I(u-,u,e)$ and $F_I=G_I(u,u,e)$ satisfy the integrability condition \eqref{IntegrCondJumpProcess}, see the comments below Theorem \ref{GeneralCompOfJumpProc}.

In the special case $\mathcal{G}= \mathcal{F}$ we have $\nu_I = \lambda_I$,  $\rho_I =\mu_I$,  $X=X^{IB}$ and the representations  \eqref{MartingRepres3} and  \eqref{MartingaleLikeRepresentation2} are equivalent, i.e.~\eqref{MartingRepres3} is a generalization of  \eqref{MartingaleLikeRepresentation2}.

Even if $\mathcal{G} \subsetneqq \mathcal{F}$ we can still have $X=X^{IB}$ or $X=X^{IB}$. The following  example presents   non-trivial processes $X$ that equal their IB-compensators or their IF-compensators.
\begin{example}\label{ExpSojournPaym}
Let  $h(M,t)(\omega): \mathcal{M} \times [0,\infty) \times \Omega \rightarrow \mathbb{R}$ be measurable and let $|h(M,t)| \leq Z$ for an integrable majorant $Z$.  Let
 $\gamma$ be the sum of the Lebesgue measure and a countable number of Dirac measures,
\begin{align*}
  \gamma (B )=  \lambda (B) + \sum_{i=1}^{\infty} \delta_{t_i}(B), \quad B \in \mathcal{B}([0,\infty)),
\end{align*}
for deterministic times points $0\leq t_1 < t_2 < ... $ that are increasing to infinity.
Then the c\`{a}dl\`{a}g process $X$ defined by
\begin{align*}
  X_t:= \sum_{M \in \mathcal{M}} \int_{[0,t]}  \mathbb{I}_{s}^{M}  h(M,s)\,\gamma (\d s)
\end{align*}
has the IB-compensator
\begin{align*}
 X^{IB}_t= \int_{(0,t]}\sum_{M \in \mathcal{M}} \mathbb{I}_{s}^{M}\E[ h(M,s)| \mathcal{G}_s] \,  \gamma(\d s).
\end{align*}
In order to see that, apply Proposition \ref{DarstellungBedErwartBzglG},  the Dominated Convergence Theorem, Proposition \ref{LemmaBoundedFractions}  and Lemma \ref{LemmaLimits} in order to obtain that
\begin{align*}
    &\lim_{n \rightarrow \infty} \sum_{\mathcal{T}_n} \E[X_{t_{k+1}}- X_{t_{k}} |\mathcal{G}_{t_{k+1}}]\\
    &=  \lim_{n \rightarrow \infty} \sum_{\mathcal{T}_n} \int_{(t_k,t_{k+1}]}  \sum_{M \in \mathcal{M}_t} \mathbb{I}^M_{t_{k+1}} \E\bigg[ \sum_{\tilde{M} \in \mathcal{M}}  h(\tilde{M},s) \mathbb{I}^{\tilde{M}}_{s} \bigg|\mathcal{G}_{t_{k+1}}\bigg] \gamma (\d s)\\
     &= \sum_{M \in \mathcal{M}_t} \lim_{n \rightarrow \infty} \sum_{\mathcal{T}_n} \int_{(t_k,t_{k+1}]}\mathbb{I}^M_{t_{k+1}} \frac{\E_M\Big[   \sum_{\tilde{M} \in \mathcal{M}} \mathbb{I}^M_{t_{k+1}} h(\tilde{M},s)   \mathbb{I}^{\tilde{M}}_{s} \Big]}{\E_M[ \mathbb{I}^{M}_{t_{k+1}} ]} \gamma (\d s)\\
     &= \sum_{M \in \mathcal{M}_t}  \int_{(0,t]}\mathbb{I}^M_{s} \frac{\E_M[    h(M,s)   \mathbb{I}^M_{s} ]}{\E_M[ \mathbb{I}^M_{s} ]} \gamma (\d s)\\
     & =   \sum_{M \in \mathcal{M}} \int_{(0,t]}\mathbb{I}^M_{s}  \E[ h(M,s)| \mathcal{G}_s]   \, \gamma(\d s)
 \end{align*}
 almost surely, where $\mathcal{M}_t$ is defined as in \eqref{DefOfMt}. In case that $s\mapsto h(M,s)$ is $\mathcal{G}$-adapted for each $M$ we have $X=X^{IB}$.
  Likewise we can show that
 the c\`{a}dl\`{a}g process
 \begin{align*}
  Y_t:= \sum_{M \in \mathcal{M}} \int_{[0,t]}  \mathbb{I}_{s-}^{M}  h(M,s)\,\gamma (\d s)
\end{align*}
has the IF-compensator
\begin{align*}
 Y^{IF}_t= \int_{(0,t]}\sum_{M \in \mathcal{M}} \mathbb{I}_{s-}^{M}\E[ h(M,s)| \mathcal{G}_{s-}] \,  \gamma(\d s).
\end{align*}
In case that $s\mapsto h(M,s)$ is $\mathcal{G}^-$-adapted for each $M$   we  have $Y=Y^{IF}$.
\end{example}
\begin{proof}[Proof of Theorem \ref{MartingaleLikeRepresentation2}]
%As  \eqref{MartingRepres3} is additive in the process $X$, it suffices to show the equation for nonnegative and bounded processes $X$ only. The general case follows then from the Monotone Convergence Theorem applied on the sequence $X_t^n := (X_t \wedge n) - (-X_t \wedge n)$, $n \in \mathbb{N}$. Note here that for non-negative $X$ the right hand side of \eqref{MartingRepres3} can be expanded to a finite linear combination of non-negative parts for which monotone convergence needs to be applied individually. Suppose now that $X$ is non-negative and bounded.
The theorem follows from the additive decomposition
\begin{align*}
&\E[ X_t | \mathcal{G}_t]-\E[ X_0 | \mathcal{G}_0]\\
 &= \lim_{n \rightarrow \infty} \sum_{\mathcal{T}_n} (\E[ X_{t_{k+1}} | \mathcal{G}_{t_{k+1}}] - \E[ X_{t_k} | \mathcal{G}_{t_k}])\\
  & = \lim_{n \rightarrow \infty} \sum_{\mathcal{T}_n}   \E[ X_{t_{k+1}}-X_{t_{k}} | \mathcal{G}_{t_{k+1}}]  +  \lim_{n \rightarrow \infty} \sum_{\mathcal{T}_n} (\E[ X_{t_k} | \mathcal{G}_{t_{k+1}}] - \E[ X_{t_k} | \mathcal{G}_{t_{k}}])
  %& = X_t - X_0 + \lim_{n \rightarrow \infty} \sum_{\mathcal{T}_n} (\E[ X_{t_k} | \mathcal{G}_{t_{k+1}}] - \E[ X_{t_k} | \mathcal{G}_{t_{k}}]).
\end{align*}
and from applying Theorem \ref{MartingaleLikeRepresentation}  for each addend $\E[ X_{t_k} | \mathcal{G}_{t_{k+1}}] - \E[ X_{t_k} | \mathcal{G}_{t_{k}}]$. The sum $\sum_{\mathcal{T}_n} (\E[ X_{t_k} | \mathcal{G}_{t_{k+1}}] - \E[ X_{t_k} | \mathcal{G}_{t_{k}}])$ has a representation of the form \eqref{MartingRepres} in case of $t_k < s \leq t_{k+1}$ for $G_I(s, u,e)$ defined by \eqref{MartingRepres4}.
Because of the c\`{a}dl\`{a}g property of $X$, by applying the Dominated Convergence Theorem pathwise for almost each $\omega \in \Omega$, we end up with \eqref{MartingRepres3} and \eqref{MartingRepres4}.
%Recall here that  the measures $\mu_I,\nu_I, \rho_I$ are locally finite for almost each $\omega \in \Omega$.
 The alternative decomposition
\begin{align*}
&\E[ X_t | \mathcal{G}_t]-\E[ X_0 | \mathcal{G}_0]\\
 & = \lim_{n \rightarrow \infty} \sum_{\mathcal{T}_n}   \E[ X_{t_{k+1}}-X_{t_{k}} | \mathcal{G}_{t_{k}}]  +  \lim_{n \rightarrow \infty} \sum_{\mathcal{T}_n} (\E[ X_{t_{k+1}} | \mathcal{G}_{t_{k+1}}] - \E[ X_{t_{k+1}} | \mathcal{G}_{t_{k}}])
  %& = X_t - X_0 + \lim_{n \rightarrow \infty} \sum_{\mathcal{T}_n} (\E[ X_{t_k} | \mathcal{G}_{t_{k+1}}] - \E[ X_{t_k} | \mathcal{G}_{t_{k}}]).
\end{align*}
leads to the second variant where $X^{IB}$ is replaced by $X^{IF}$ and $X_{u-}$ is replaced by $X_u$ in \eqref{MartingRepres4}.
\end{proof}
\begin{remark}\label{RemarkIntuitRepresG}
Without loss of generality suppose here that $0 \not\in E$.
Motivated by Remark \ref{RemarkInformRepres}, for any $t > 0$ and any integrable random variable $\xi$ let
\begin{align*}
  \E[ \xi|\mathcal{G}_{t}, R_I=(t,e) ]&:=  \E[ \xi |(\Gamma_t, R_I)=\cdot \,](\Gamma_t,(t,e)), \\
  \E[ \xi |\mathcal{G}_{t-}, R_I=(t,e) ]&:=  \E[ \xi |(\Gamma_{t-}, R_I)=\cdot \,](\Gamma_{t-},(t,e)) , \quad e \in E_I,\\
  \E[ \xi |\mathcal{G}_{t}, \Delta_t=m ]&:=  \E[ \xi |(\Gamma_t,  \Delta_t)=\cdot \,](\Gamma_t,m) ,\\
   \E[ \xi |\mathcal{G}_{t-}, \Delta_t=m ]&:=  \E[ \xi |(\Gamma_{t-},  \Delta_t)=\cdot \,](\Gamma_{t-},m) ,\quad m \in \{0,1\},
\end{align*}
where the random variable $\Delta_t:= \sum_{I \in \mathcal{N}} \mu_I(\{t\}\times E_I)$ indicates whether there is a stopping event at time $t$.
One can then show that the integrands in \eqref{MartingRepres3} almost surely equal
\begin{align*}
G_I(t-, t,e) &= \E[  X_{t-} |\mathcal{G}_{t-}, R_I=(t,e) ] - \E[ X_{t-} |\mathcal{G}_{t-}, \Delta_t=0 ],\\
G_I(t, t,e) &= \E[  X_{t-} |\mathcal{G}_{t}, R_I=(t,e) ] - \E[  X_{t-} |\mathcal{G}_{t}, \Delta_t=0 ],
\end{align*}
for each $t>0$, $I \in \mathcal{N}$, and $e \in E_I$. The differences on the right hand side have  intuitive interpretations: The  first line describes the difference in expectation between a change scenario  and a remain scenario if we are currently at time $t-$ and are looking forwards in time. Similarly, the second line describes the difference in expectation between a change scenario  and a remain scenario if we are currently at time $t$ and are looking backwards in time. In formula \eqref{MartingRepres3} these differences in expectation are integrated with respect to the compensated forward and backward scenario dynamics.
\end{remark}

\section{Examples}

Here we come back to Examples \ref{IntExpLifeIns} and \ref{IntExpCredRat} and show how our infinitesimal martingale representations can be applied in life insurance and credit risk modeling.

\begin{example}[Evaluation of life insurance based on big data]\label{ExpThiele1}
Consider a life  insurance contract where the insurer collects health-related information about the insured with the aim to improve forecasts of the individual future insurance liabilities. For example, this can involve data from activity trackers or social media. Here, the marked point process includes the time of death $\tau_1$, which is recorded as $\zeta_1:=\tau_1$, and further health-related information $(\tau_i, \zeta_i)_{i \geq 2}$.
Upon request of the policyholder with reference to the 'right to erasure' according to the General Data Protection Regulation of the European Union, or as a self-imposed data privacy effort of the data provider, the insurer deletes parts of the health related data at certain time points, i.e.~we expand $(\tau_i, \zeta_i)_{i \geq 2}$ by  deletion times $(\sigma_i)_{i \geq 2}$. For completeness we define $\sigma_1:= \infty$.

In the classical insurance modelling without data deletion, the time dynamics of the expected future insurance payments is commonly described by Thiele's equation, see e.g.~M{\o}ller (1993) and Djehiche \& L\"{o}fdahl (2016). Suppose that $A_t$ gives the aggregated benefit cash flow of the life insurance contract on $[0,t]$, including  survival benefits with rate $a(t)$ and a death benefit of $\alpha(t)$ upon death at time $t$, i.e.
\begin{align*}
  A_t = \int_{0}^t \mathbf{1}_{\{\tau_1 > s\}} a(s)  \d s +  \int_{[0,t]\times E} \alpha(s) \, \mu_{\{1\}}(\d(s,e)), \quad t \geq 0.
\end{align*}
We assume here that $a:[0,\infty) \rightarrow \mathbb{R}$ and $\alpha:[0,\infty) \rightarrow \mathbb{R}$ are bounded.
For a given  interest intensity $\phi:[0,\infty)  \rightarrow [0,\infty)$ and a finite contract horizon of $[0,T]$, the process
\begin{align*}
  X_t :=\int_{(t, T]} e^{-\int_t^s \phi(u) \d u} \d A_s
\end{align*}
describes the discounted future liabilities of the insurer seen from time $t$.
As the  c\`{a}dl\`{a}g  process $X=(X_t)_{t \geq 0}$ is neither adapted to $\mathcal{F}$ nor adapted to $\mathcal{G}$, an insurer has to work with the optional projection instead (the so-called prospective reserve), i.e.~the insurer aims to calculate
\begin{align*}
  X^{\mathcal{F}}_t = \E[ X_t | \mathcal{F}_t ], \quad t \geq 0
\end{align*}
 in case that there is no data deletion and
\begin{align*}
  X^{\mathcal{G}}_t = \E[ X_t | \mathcal{G}_t ], \quad t \geq 0
\end{align*}
in case that information deletions may occur. The process $X^{\mathcal{G}}$ is well defined c\`{a}dl\`{a}g process according to Theorem \ref{PropDefOptionalProjection}.

By applying \eqref{ExtendedClassMartRepr} and It\^{o}'s Lemma,  we can derive the so-called stochastic Thiele equation
\begin{align}\label{ThieleBSDE}
  \d X^{\mathcal{F}}_t & = - \d A_t  +  \phi(t)\,X^{\mathcal{F}}_{t-} \d t + \sum_I \int_{E_I } F_I(t,e)  \, (\mu_I -\lambda_I )( \d t \times \d e )
\end{align}
with terminal condition   $X^{\mathcal{F}}_T=0$, cf.~M{\o}ller (1993, equation (2.17)). The integrand $F_I(t,e)$, which almost surely equals
$$F_I(t,e) =\E[  X_{t-} |\mathcal{F}_{t-}, R_I=(t,e) ] - \E[ X_{t-} |\mathcal{F}_{t-}, \Delta_t=0 ]$$
according to Remark \ref{RemarkIntuitRepresG}),  is a key figure in life insurance risk management, known as the sum at risk.  Equation \eqref{ThieleBSDE}  can be interpreted as a backward stochastic differential equation (BSDE) with solution $(X^{\mathcal{F}},(F_I)_I)$, see Djehiche \& L\"{o}fdahl (2016) for Markovian models and Christiansen \& Djehiche (2020) for non-Markovian models. The BSDE \eqref{ThieleBSDE} is in particular relevant if the life insurance payments $a$ and $\alpha$ depend on the current policy value such that the insurance cash flow  $A$ is only implicitly defined.

By applying Theorem \ref{MartingaleLikeRepresentation2} and It\^{o}'s Lemma and using the fact that process $A$ equals its own IB-compensator (since $\sigma_1 = \infty$), we are able to derive an analogous equation for $X^{\mathcal{G}}$, namely
\begin{align}\label{ThieleWRTG}\begin{split}
\d X^{\mathcal{G}}_t  = - \d A_t  +  \phi(t)\,X^{\mathcal{G}}_{t-} \d t &+ \sum_I\int_{E_I } G_I(t-,t,e)  \, (\mu_I -\nu_I )( \d t \times \d e ) \\
 &+ \sum_I \int_{E_I } G_I(t,t,e)  \, (\rho_I -\mu_I )( \d t \times \d e )
\end{split}\end{align}
with terminal condition   $X^{\mathcal{G}}_T=0$. This equation can be interpreted as a new type of BSDE with solution $(X^{\mathcal{G}},(G_I)_I)$, featuring an IF-martingale and an IB-martingale instead of a classical martingale. %Note here that the integrability  assumption \eqref{IntegrCondJumpProcess} is indeed satisfied for $F(t,e)=G_I(t-,t,e)$ and $F(t,e)=G_I(t,t,e)$.
 The IF-martingale in the first line describes the impact of new information on the optional projection $X^{\mathcal{G}}$.   The IB-martingale in the second line quantifies the effect of information deletions on $X^{\mathcal{G}}$. The integrands $G_I(t-,t,e)$ and $G_I(t,t,e)$, which almost surely equal
\begin{align*}
G_I(t-, t,e) &= \E[  X_{t-} |\mathcal{G}_{t-}, R_I=(t,e) ] - \E[ X_{t-} |\mathcal{G}_{t-}, \Delta_t=0 ],\\
G_I(t, t,e) &= \E[  X_{t-} |\mathcal{G}_{t}, R_I=(t,e) ] - \E[  X_{t-} |\mathcal{G}_{t}, \Delta_t=0 ]
\end{align*}
according to Remark \ref{RemarkIntuitRepresG}, generalize the classical definition of the sum at risk. They are needed in life insurance risk management for sensitivity analyses, safe-side calculations, contract modifications and surplus decompositions.

If the policyholder may decide about data deletions at discretion, then the resulting value changes of the insurance contract can be systematically exploited by the policyholder, leading to a kind of data privacy arbitrage. Since it is the IB-martingale  in \eqref{ThieleWRTG} that measures the value changes due to data deletions at times $(\sigma_i)_{i \geq 2}$, it represents the potential data privacy arbitrage. A simple solution for avoiding data privacy arbitrage could be to charge the IB-martingale as a fee upon a data deletion request.  The fee can also be negative and represents then a bonus payment. However, more complex risk sharing schemes will be needed in insurance practice that moreover distinguish between different causes for data deletions. By following the concept of Schilling et al.~(2020) to interpret martingale representations as risk factor decompositions, we may interpret the infinitesimal martingale parts in \eqref{ThieleWRTG} as an  additive surplus decomposition that can distinguish between numerous kinds of jump events $\mu_I$, $I\subset \mathbb{N}$. Such an additive decomposition of the insurer's surplus  is an important step for aligning  insurance risk management to the digital age.
\end{example}

\begin{example}[Markovian approximations in credit rating models]\label{ExpThiele2}
A popular approximation concept in credit rating modeling is to pretend that the credit rating process is Markovian even if the empirical data does not fully support this assumption.
Suppose that credit ratings are updated at integer times only. By setting $\tau_i:=i-1$ and $\sigma_i:=i$ for $i \in \mathbb{N}$ and defining $\zeta_i$ as the credit rating at time $\tau_i$, the rating process $R=(R_t)_{t \geq 0}$ has the representation
\begin{align*}
R_t= \sum_{i=1}^{\infty} \zeta_i \mathbf{1}_{\{\tau_i \leq t < \sigma_{i} \}}, \quad t \geq 0,
\end{align*}
and satisfies
$$\mathcal{G}_t=\sigma(R_t) \vee \mathcal{Z}, \quad t \geq 0. $$
The jumps of process $R$ correspond to the random counting measures $\mu_I$. In the Jarrow-Lando-Turnbull model the rating space $E$ is finite, $(R_i)_{i \in \mathbb{N}_0}$ is assumed to be a Markov-chain, and
\begin{align}\label{JLTmodelPQ}
Q(R_{i+1}= r_{i+1}| R_{i}=r_i ) = \pi(i,r_i)\, P(R_{i+1}= r_{i+1}| R_{i}=r_i ),  \quad r_i,r_{i+1} \in E,\, i \in \mathbb{N}_0,
\end{align}
where $Q$ is the equivalent martingale measure and $\pi$ is a deterministic function on $\mathbb{N}_0 \times E$. The latter formula allows us to estimate $Q$ from market data by a two step method: First, the transition probabilities $P(R_{i+1}= r_{i+1}| R_{i}=r_i )$ are estimated from observed credit rating time series. Then the function $\pi$ is calibrated such that the risk-neutral values of  credit rating derivatives conform with observed market prices.  Once we have $Q$, we can use
the (classical) martingale representation \eqref{ClassMartRepresOFCondExp} in order to explicitly construct hedges for financial claims $\xi$, see e.g.~Last \& Penrose (2011). For example, by arguing analogously to formula \eqref{ThieleBSDE},  the claim $\xi=h(R_T)$ has  the hedging strategy
\begin{align}\label{HedgingF}\begin{split}
h(R_T)  - B(0) \, \E_Q\bigg[ \frac{h(R_T)}{B(T)} \bigg| \mathcal{F}_0 \bigg] &= \int_{(0,T]} B(t-) \, \E_Q\bigg[ \frac{h(R_T)}{B(T)} \bigg| \mathcal{F}_{t-} \bigg]  \, \frac{B(\d t)}{B(t-)}\\ &\quad + \sum_{I \in \mathcal{N}} \int_{(0,T]\times E_I} F_I(t,e) \big(\mu_I- \lambda_I\big)(\d t \times \d e).
\end{split}\end{align}
The integral in the first line describes the investments in the risk-free asset $B$. The second line corresponds to risky investments and can be rewritten in terms of the tradable assets in a complete  financial market, cf.~Section 5 in Last \& Penrose (2011).

A standard estimator for the stage occupation probabilities of the Markov-chain $R$  w.r.t.~$P$ is the Aalen-Johansen estimator, which directly corresponds to the Nelson-Aalen estimator for the compensators $\lambda_I$ of the random counting measures $\mu_I$. Under the assumption that $R$ is Markovian, the Nelson-Aalen estimator consistently estimates $\lambda_I=\nu_I$. If $R$ is not Markovian, then the Nelson-Aalen estimator still consistently estimates $\nu_I$, see Datta \& Satten (2001), but now $\nu_I \neq \lambda_I$. In other words, if we ignore the information beyond $\mathcal{G}$ in the estimation of $\lambda_I$  due to an incorrect  Markov assumption, then we actually estimate the infinitesimal forward compensator $\nu_I$ instead of the classical compensator $\lambda_I$.
Similarly, ignoring the information beyond $\mathcal{G}$ upon estimating $F_I$ and  \eqref{EvaluationFormula} from market data means that we unintentionally end up with
\begin{align*}
  G_I(t-,t,e) &= B(0) \,\E_Q\bigg[ \frac{h(R_T)}{B(T)} \bigg|\mathcal{G}_{t-}, R_I=(t,e) \bigg] - B(0) \,\E_Q\bigg[ \frac{h(R_T)}{B(T)} \bigg|\mathcal{G}_{t-}, \Delta_t=0 \bigg]
\end{align*}
and  $B(0) \,\E_Q[ h(R_T)/B(T) |\mathcal{G}_{t}] $ rather than
\begin{align*}
  F_I(t,e) &=B(0) \,\E_Q\bigg[ \frac{h(R_T)}{B(T)} \bigg|\mathcal{F}_{t-}, R_I=(t,e) \bigg] - B(0) \,\E_Q\bigg[ \frac{h(R_T)}{B(T)} \bigg|\mathcal{F}_{t-}, \Delta_t=0 \bigg]
\end{align*}
and  $B(0) \,\E_Q[ h(R_T)/B(T) |\mathcal{F}_{t}] $.
For the correct interpretation of the latter conditional expectations with mixed conditions see Remark \ref{RemarkIntuitRepresG}. All in all, by ignoring the information beyond $\mathcal{G}$ in the estimation and calculation of the hedging strategy \eqref{HedgingF} due to  an incorrect  Markov assumption,  we unintentionally  end up with the trading strategy
\begin{align}\label{HedgingG}\begin{split}
  &\int_{(0,T]} B(t-)\,\E_Q\bigg[ \frac{h(R_T)}{B(T)} \bigg| \mathcal{G}_{t-} \bigg]  \,  \frac{B(\d t)}{B(t-)} \\
  &+ \sum_{I \in \mathcal{N}} \int_{(0,T]\times E_I} G_I(t-,t,e) \big(\mu_I- \nu_I\big)(\d t \times \d e)
\end{split}\end{align}
instead of the hedging strategy on the right hand side of \eqref{HedgingF}. Is \eqref{HedgingG} still a hedge for $h(R_T)$? By applying Theorem \ref{MartingaleLikeRepresentation} instead of \eqref{ClassMartRepresOFCondExp} and using that $\mathcal{F}_0=\mathcal{G}_0$ and $\mathcal{G}_T= \sigma(R_T) \vee \mathcal{Z}$, analogously to \eqref{HedgingF} we get
\begin{align}\label{HedgingGCorrection}\begin{split}
 h(R_T) - B(0) \, \E_Q\bigg[ \frac{h(R_T)}{B(T)} \bigg| \mathcal{F}_0 \bigg] &= \int_{(0,T]}B(t-)\, \E_Q\bigg[ \frac{h(R_T)}{B(T)} \bigg| \mathcal{G}_{t-} \bigg]  \, \frac{B(\d t)}{B(t-)}\\ &\quad + \sum_{I \in \mathcal{N}} \int_{(0,T]\times E_I} G_I(t-,t,e) \big(\mu_I- \nu_I\big)(\d t \times \d e)\\
 &\quad + \sum_{I \in \mathcal{N}} \int_{(0,T]\times E_I} G_I(t,t,e) \big(\rho_I- \mu_I\big)(\d t \times \d e).
\end{split}\end{align}
Equation \eqref{HedgingGCorrection} implies that  the trading strategy \eqref{HedgingG} is actually not a hedge for $\xi=h(R_T)$. The  hedging error is given by the third line in \eqref{HedgingGCorrection}. To sum it up, by estimating and calculating the hedging strategy \eqref{HedgingF} under an incorrect Markov assumption for $R$, we unintentionally replace the (classical) $\mathcal{F}$-martingale in \eqref{HedgingF} by the $\mathcal{G}$-IF-martingale in \eqref{HedgingG} (the risk-free investment is also affected), and the corresponding $\mathcal{G}$-IB-martingale is just the hedging error.

Schilling et al.~(2020) interpret martingale representations as additive risk factor decompositions.  Likewise we can read  the (infinitesimal) martingale parts in \eqref{HedgingF} and \eqref{HedgingGCorrection}  as  linear risk factor decompositions. The relevance of such decompositions in credit risk modelling is explained in Rosen \& Saunders (2010).
\end{example}

\section*{References}

\bigskip {\small
\begin{list}{}{\leftmargin1cm\itemindent-1cm\itemsep0cm}
%
%\item{Alonso, A., 1988. A Counterexample on the Continuity of Conditional Expectations. Journal mathematical analysis and applications, 129, 1-5.}

\item {Bandini, E., 2015.  Existence and uniqueness for backward stochastic differential equations driven by a random measure, possibly non quasi-left continuous. Electron. Commun. Probab. 20/71.  doi:10.1214/ECP.v20-4348.}

\item {Boel, R., Varaiya, P., Wong, E., 1975. Martingales on jump processes I: representation
results. SIAM J. Control \& Optimization 13,999-1021.}

%\item {Confortola, F., Fuhrman, M., Jacod, J., 2016. Backward stochastic differential equation driven by a marked point process: an elementary approach with an application to optimal control. The Annals of Applied Probability, 26(3), 1743-1773.}

\item {Confortola, F., 2019. $ L^p $ solution of backward stochastic differential equations driven by a marked point process. Mathematics of Control, Signals, and Systems, 31(1), 1.}

\item {Cohen, S.N., 2013. A martingale representation theorem for a class of jump processes. arXiv:1310.6286v1.}

\item {Cohen, S.N., Elliott, R.J., 2008. Solutions of Backward Stochastic Differential Equations on Markov Chains. Commun. Stoch. Anal.. 2. 10.31390/cosa.2.2.05.}

\item {Cohen, S.N., Elliott, R.J., 2010. Comparisons for backward stochastic differential equations on Markov chains and related no-arbitrage conditions. The Annals of Applied Probability, 20(1), 267-311.}

%\item {Cohen, S.N., Szpruch, L., 2011. On Markovian solutions to Markov Chain BSDEs. Numerical Algebra, Control and Optimization 2. 10.3934/naco.2012.2.257.}

\item {Chou, C.S., Meyer, P.-A., 1975. Sur la r\'{e}presentation des martingales comme
int\'{e}grales stochastiques dans les processus ponctuels. In: S\'{e}minaire de Probabilit
\'{e}s IX, Lecture Notes in Mathematics 465. Springer-Verlag.}

\item {Christiansen, M.C., Djehiche, B., 2020. Nonlinear reserving and multiple contract modifications in life insurance. Insurance: Mathematics and Economics 93, 187-195.}

\item {Datta, S., Satten, G.A., 2001. Validity of the Aalen–Johansen estimators of stage occupation
probabilities and Nelson–Aalen estimators of integrated
transition hazards for non-Markov models, Statistics \& Probability Letters 55, 403-411.}

\item {Davis, M.H.A., 1976. The representation of martingales of jump processes, SIAM J.
Control \& Optimization 14, 623-638.}

%\item {Davis, M. H., 1984. Piecewise‐deterministic Markov processes: A general class of non‐diffusion stochastic models. Journal of the Royal Statistical Society: Series B (Methodological), 46(3), 353-376.}

\item {Davis, M.H.A., 2005. Martingale representation and all that. In:  Advances in control, communication networks, and transportation systems, p.~57-68. Birkh\"{a}user Boston.}

\item {Delong, {\L}., 2013. Backward stochastic differential equations with jumps and their actuarial and financial applications. London: Springer.}

\item {Djehiche, B., L\"{o}fdahl, B., 2016. Nonlinear reserving in life insurance: Aggregation and mean-field approximation.
Insurance: Mathematics and Economics 69, 1-13.}

\item {Elliott, R.J., 1976. Stochastic integrals for martingales of a jump process with partially
accessible jump times.  Z.~Wahrscheinlichkeitstheorie ver.~Geb 36,213-266.}

%\item {Helwich, M, 2008. Durational effects and non-smooth semi-Markov models in life insurance. Doctoral dissertation, University of Rostock (2008). urn:nbn:de:gbv:28-diss2008-0056-4. http://rosdok.uni-rostock.de/}

\item {Jacod, J., 1975. Multivariate point processes: Predictable projections, Radon-Nikodym derivatives, representation of martingales. Z.~Wahrscheinlichkeitstheorie ver.~Geb.~31, 235-253.}

\item {Jarrow, R.A., Lando, D., Turnbull, S.M., 1997. A Markov model for the term structure of credit risk spreads. The review of financial studies, 10(2), 481-523.}

\item {Karr, A., 1986. Point processes and their statistical inference. Dekker, New York.}

\item{ Lando, D., Skodeberg, T., 2002. Analyzing rating transitions and rating drift with continuous
observations. Journal of Banking and Finance, 26,423–444.}

 \item {Last, G., Penrose, M.D., 2011. Martingale representation for Poisson processes with applications to minimal variance hedging. Stochastic Processes and their Applications 121(7), 1588-1606.}

\item {McNeil, A.J., Frey, R., Embrechts, P., 2015. Quantitative risk management: concepts, techniques and tools--revised edition. Princeton university press.}

\item {M{\o}ller, C.M., 1993. A stochastic version of Thiele's differential equation.
Scandinavian Actuarial Journal 1, 1-16.}

\item {M{\o}ller, T., Steffensen, M., 2007. Market-valuation methods in life and pension insurance. Cambridge University Press.}
\item {Norberg, R., 1992. Hattendorff's theorem and Thiele's differential equation generalized. Scandinavian Actuarial Journal, 1992(1), 2-14.}

\item {Norberg, R., 1999. A theory of bonus in life insurance. Finance and Stochastics, 3(4), 373-390.}

\item {Norberg, R., 2003. The Markov chain market. ASTIN Bulletin, 33(2), 265-287.}

\item {Norberg, R., 2005. Anomalous PDEs in Markov chains: domains of validity and numerical solutions. Finance and Stochastics, 9(4), 519-537.}

\item {Norberg, R., 2013. Optimal hedging of demographic risk in life insurance. Finance and Stochastics, 17(1), 197-222.}

%\item {Papatriandafylou, A., Waters, H.R., 1984. Martingales in life insurance, Scandinavian Actuarial Journal 4, 210-230.}

\item {Pardoux, \'{E}., Peng, S., 1994. Backward doubly stochastic differential equations and systems of quasilinear SPDEs. Probability Theory and Related Fields, 98(2), 209-227.}

\item {Rosen, D., Saunders, D., 2010. Risk factor contributions in portfolio credit risk models. Journal of Banking \& Finance, 34, 336-349.}

\item {Schilling, K., Bauer, D., Christiansen, M.C., Kling, A., 2020. Decomposing Dynamic Risks into Risk Components.  Management Science 66(12), 5485-6064.}

\item {Tang, H.,  Wu, Z., 2013. Backward stochastic differential equations with Markov chains and related asymptotic properties. Advances in Difference Equations, 2013(1), 285.}
%
%
%
%\item Sapin, M., Touraine, M., 2017. "Droit a l'oubli": l'acces au credit facilite pour les personnes ayant ete atteintes d'une maladie grave. http://solidarites-sante.gouv.fr/ archives/archives-presse/ archives-communiques-de-presse/article/ droit-a-l-oubli-l-acces-au-credit-facilite-pour-les-personnes-ayant-ete. Accessed: 2017-09-22.
%
\end{list}}

\end{document}